\title{Rank varieties for Hopf algebras}
\author{Sarah Scherotzke}
\address{Mathematical Institute \\ University of Oxford \\ 24-29 St.\ Giles \\ Oxford OX1 3LB \\ United Kingdom}
\email{scherotz@maths.ox.ac.uk}
\author{Matthew Towers}
\address{Mathematical Institute \\ University of Oxford \\ 24-29 St.\ Giles \\ Oxford OX1 3LB \\ United Kingdom}
\email{towers@maths.ox.ac.uk}
\keywords{Rank varieties, Hopf algebras}
\subjclass{16W30, 16E40 (2000)}
\newtheorem{theo}{Theorem}[section]
\newtheorem{lem}[theo]{Lemma}
\newtheorem{cor}[theo]{Corollary}
\newtheorem{prop}[theo]{Proposition}
\newtheorem{rem}[theo]{Remark}
\newtheorem{defn}[theo]{Definition}
\newcommand{\Z}{\operatorname{\mathbb{Z}}\nolimits}
\newcommand{\ext}{\operatorname{Ext}}
\newcommand{\End}{\operatorname{End}}
\newcommand{\Hom}{\operatorname{Hom}}
\newcommand{\stend}{\operatorname{\underline{End}}}
\newcommand{\id}{\operatorname{\mathrm{id}}}
\newcommand{\res}{\operatorname{res}}
\newcommand{\Ann}{\operatorname{\mathrm{Ann}}}
\newcommand{\aut}{\operatorname{\mathrm{Aut}}}
\newcommand{\uqsl}{\ensuremath{ u_q (\mathfrak{sl}_2)}}
\begin{document}
\date{\today}
\begin{abstract}
We construct rank varieties for the Drinfel'd double of the Taft
algebra and for $\uqsl$.  For the Drinfel'd double
when $n=2$ this uses a result which identifies a family of
subalgebras that control projectivity of $\Lambda$-modules
whenever $\Lambda$ is a Hopf algebra satisfying a certain
homological condition.  In this case we show that our rank variety
is homeomorphic to the cohomological support variety. We also
show that $\ext^*(M,M)$ is finitely generated over the cohomology
ring of the Drinfel'd double for any finitely-generated module $M$.
\end{abstract}

\maketitle

\begin{section}{Introduction}
Rank varieties were first introduced by Carlson in \cite{carl:vars}, with the aim of providing easily computable homological information about finite-dimensional modules over the group algebra $kG$ of an elementary abelian $p$-group, where $k$ is a field of characteristic $p$.  Combined with Quillen's stratification theorem \cite[5.6]{bensonII} they became a useful tool for arbitrary finite groups, since the rank variety is much easier to work with than the more abstractly defined cohomological support variety.

Since Carlson's original definition rank varieties have been introduced in many other contexts, for example for
$p$-restricted Lie
algebras \cite{FP}, finite-dimensional cocommutative Hopf
algebras \cite{FPev},
quantum complete intersections \cite{BEH}, and for certain tensor products of Taft algebras \cite{PW}.

These definitions of rank variety all have the property that the rank
variety can be computed explicitly, usually by
determining if a module is projective when viewed as a module for
certain commutative subalgebras generated by an nilpotent element. Also rank varieties should characterize
projectivity of a module, that is, the rank variety is trivial if
and only if the module is projective. 

In this paper we define rank varieties for Drinfel'd doubles of the
Taft algebra and for $\uqsl$, a finite dimensional quotient of the quantised enveloping algebra of $\mathfrak{sl}_2$. These algebras are
Hopf algebras that are neither commutative nor cocommutative, and the latter is an important example of a small quantum group. The Drinfel'd double of a
finite-dimensional Hopf algebra was defined by Drinfel'd to provide
solutions to the quantum Yang-Baxter equation arising from
statistical mechanics. The Drinfel'd double of the Taft algebra also turns out to be of interest in knot theory \cite{RW}. The
representation theory of Drinfel'd doubles of Taft
algebras has been studied in \cite{EGST} and its cohomology ring, which turns out to be finitely generated, was computed by Taillefer in \cite{taillefer}.

This paper is organized as follows. In Section \ref{prelims} we summarize some properties of finite-dimensional Hopf
algebras that will be needed later.  Some basic
properties of the support variety are stated. In Section \ref{detecting_proj_section} we
develop a general condition for a set of subalgebras to detect
projectivity based on cohomological properties of a
finite-dimensional Hopf algebra. In the next two sections we work with the  Drinfel'd double of the Taft algebra when $n=2$ and then in general, along with $\uqsl$. 

First we show that the Drinfel'd double of the Taft algebra with
$n=2$ satisfies the cohomological properties needed to apply our
results of Section \ref{detecting_proj_section}. Therefore we can define rank varieties in
this case. The rank variety is defined via a set of
subalgebras generated by a nilpotent element and a unit. The rank
variety is given as union of lines through the origin of $k^2$. We
show that the support variety and the rank variety are
homeomorphic and that the rank variety satisfies the tensor
product property, that is the rank variety of the tensor product
of two modules is equal to the intersection of the respective rank
varieties.

Finally in Section \ref{n>2section} we introduce rank varieties for the
Drinfel'd doubles of the Taft algebra for any $n$ and for
$\uqsl$. In the general case we get the rank
variety using rank varieties defined for Morita equivalent blocks.
Unlike the case $n=2$ we cannot find subalgebras generated by a
nilpotent element in order to define rank varieties, but we use
the known structure of blocks to determine the basic algebra to
which the blocks are Morita equivalent. It then turns out that we
can use theory developed in \cite{BEH} to define rank varieties
for these algebras. In the case $n=2$ the two approaches used
define the same rank variety and for any $n$ this rank variety is
also given in terms of the projective space $P^1$.

In this paper all algebras are finite-dimensional algebras over a field $k$, and all modules are finite-dimensional left modules.

\section{Preliminaries} \label{prelims}
In this section we establish some notation and recall the definition of support varieties for finite-dimensional Hopf algebras.  We also prove two technical results that will be needed later on.

\begin{subsection}{Varieties}

Let $A$ be a finite-dimensional algebra over an algebraically closed field $k$ and let $A$-mod be the category of finite-dimensional left $A$-modules.  Suppose we have a map $V$ from $A$-mod to a set $S$ containing a distinguished element $0$.  The following four conditions are important properties of all the existing ``module variety'' theories:

\begin{enumerate}
\item[(C1)] $V(M\oplus N)=V(M)\cup V(N)$ for all $M, N \in A$-mod.

\item[(C2)] Let $0 \to M_1 \to M_2 \to M_3 \to 0$ be an exact sequence of
finite-dimensional $A$-modules, then $V(M_i)\subset V(M_j)\cup
V(M_s)$ where $\{i,j,s\}=\{1,2,3\}$.

\item[(C3)] $V(\Omega^n(M))=V(M)$ for all $n\in\Z$ and for all $M \in
A$-mod.

\item[(C4)] (Dade's Lemma) $V(M)=\{0\}$ if and only if $M$ is projective.
\end{enumerate}

If $A$ is a Hopf algebra, the cohomological support variety of a module can be defined as follows.  Denote by $\ext^{>0}_A(k,k)$ the unique homogeneous maximal ideal of $\ext^{ev}_A(k,k)$. Let $M$ and $N$ be two finitely generated $A$-modules and let $\psi_M:\ext_A^{ev}(k,k) \to \ext_A^{ev}(M,M)$ be the ring homomorphism given by tensoring an extension representing an element of $\ext_{A}^n(k,k)$ with $M$.  Using $\psi_M$ we can view $\ext^{*}_{A}(M,N)$ as a
$\ext^{ev}_{A}(k,k)$-module.

\begin{defn} Let $\Ann(M,N)$ denote the annihilator of $\ext_A^*(M,N)$ under the action of $\ext^{ev}_{A}(k,k)$. Let $\operatorname{Proj}\ext^{ev}_{A}(k,k)$ be the set of homogeneous prime ideals not contained in any homogeneous prime except $\ext_A^{>0}(k,k)$.  The {\textbf support variety} $V^s(M,N)$ is the set of elements of $\operatorname{Proj}\ext^{ev}_{A}(k,k)$ containing $\Ann(M,N)$.  We write $V^s(M)$ for $V^s(M,M)$.
\end{defn} 

Note that $V^s(M) \subset V^s(k)$. We assume for the rest of this section that $\ext_A^{ev}(k,k)$ is a finitely generated commutative ring and that $\ext_A^*(M,N)$ is finitely generated as a $\ext_A^{ev}(k,k)$-module. Then applying verbatim the proof of \cite[Proposition 2]{PW} gives the following results:
\begin{lem}
Let $M$ and $N$ be finite-dimensional $A$-modules. Then (C1)-(C4) hold and we have
\begin{enumerate}

\item $V^s(M,N) \subset V^s(M) \cap V^s(N)$.

\item $V^s(M\otimes N)\subset V^s(M) \cap V^s(N)$.
 
\item $V^s(M)= \bigcup_{S} V^s(M,S)$, where the union is over all simple $A$-modules $S$.
\end{enumerate}

\end{lem}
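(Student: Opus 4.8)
The plan is to reduce the whole statement to standard commutative algebra, since the two standing hypotheses make $R:=\ext_A^{ev}(k,k)$ a Noetherian graded ring and each $\ext_A^*(M,N)$ a finitely generated graded $R$-module. Under this identification $V^s(M,N)$ is precisely the support of $\ext_A^*(M,N)$ inside $\operatorname{Proj} R$, so I may invoke the usual formalism of supports of finitely generated modules. First I would record the two elementary facts that drive the inclusions: for homogeneous ideals $V(I\cap J)=V(I)\cup V(J)$, and a prime $P\in\operatorname{Proj} R$ lies outside the support of a finitely generated module $X$ exactly when $X_P=0$; since localisation is exact, a long exact sequence of finitely generated $R$-modules yields $\operatorname{supp}(X_2)\subseteq\operatorname{supp}(X_1)\cup\operatorname{supp}(X_3)$ for any three consecutive terms $X_1,X_2,X_3$.

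Next I would establish (1), (2), (3), from which (C1)--(C3) follow. For (1) the key observation is that the $R$-action on $\ext_A^*(M,N)$ is given both by $\xi\mapsto\psi_N(\zeta)\circ\xi$ and by $\xi\mapsto\xi\circ\psi_M(\zeta)$, and that $\Ann(M,M)=\ker\psi_M$ (evaluate on $\id_M\in\ext_A^0(M,M)$). Hence $\ker\psi_M\subseteq\Ann(M,N)$ and $\ker\psi_N\subseteq\Ann(M,N)$, giving $V^s(M,N)\subseteq V^s(M)\cap V^s(N)$; feeding this into the block decomposition of $\ext_A^*(M\oplus N,M\oplus N)$, whose annihilator is the intersection of the four blockwise annihilators, collapses the cross terms and yields (C1). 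For (2) I would use that, $A$ being a Hopf algebra, the exact functor $-\otimes N$ sends projectives to projectives and so induces $\ext_A^*(M,M)\to\ext_A^*(M\otimes N,M\otimes N)$, through which $\psi_{M\otimes N}$ factors as $\psi_{M\otimes N}=(-\otimes\id_N)\circ\psi_M$; thus $\ker\psi_M\subseteq\ker\psi_{M\otimes N}$, i.e. $\Ann(M,M)\subseteq\Ann(M\otimes N,M\otimes N)$, whence $V^s(M\otimes N)\subseteq V^s(M)$, and symmetrically $V^s(M\otimes N)\subseteq V^s(N)$. For (3) the inclusion $\supseteq$ is immediate from (1), while $\subseteq$ follows by induction on the composition length of the second variable: the long exact sequence of $\ext_A^*(M,-)$ gives $V^s(M,N)\subseteq V^s(M,N')\cup V^s(M,N'')$ for $0\to N'\to N\to N''\to 0$, so taking $N=M$ writes $V^s(M)$ as a union of the $V^s(M,S)$.

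With (1)--(3) available, (C2) follows by applying the localisation argument to $\ext_A^*(-,S)$ for each simple $S$ and then invoking (3) and (1), while (C3) follows because over the self-injective algebra $A$ the syzygy operator $\Omega$ is a stable autoequivalence, so $\ext_A^*(\Omega^n M,N)$ and $\ext_A^*(M,N)$ agree up to a degree shift and a finite-dimensional discrepancy and hence have the same support in $\operatorname{Proj} R$.

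The main obstacle is (C4). One direction is formal: if $M$ is projective then $\ext_A^{>0}(M,M)=0$, so $\ext_A^*(M,M)$ is finite-dimensional and supported only at the irrelevant ideal, giving $V^s(M)=\{0\}$. The content is the converse: $V^s(M)=\{0\}$ forces $\sqrt{\Ann(M,M)}$ to contain $\ext_A^{>0}(k,k)$, so the finitely generated module $\ext_A^*(M,M)$ is killed by a power of the irrelevant ideal and is therefore finite-dimensional. The final implication, that finite-dimensionality of $\ext_A^{>0}(M,M)$ forces $M$ to be projective, is the one genuinely non-formal step, where the finite-generation hypotheses and the self-injectivity of $A$ must be used in an essential way; I would import it from the argument of \cite[Proposition 2]{PW}, which applies verbatim in the present setting.
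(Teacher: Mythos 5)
Your strategy---recasting $V^s(M,N)$ as the support of a finitely generated graded module over $R=\ext_A^{ev}(k,k)$, identifying $\Ann(M,M)=\ker\psi_M$, and exploiting the (standard, but worth citing) fact that the $R$-action on $\ext_A^*(M,N)$ can be computed either as $\xi\mapsto\psi_N(\zeta)\circ\xi$ or as $\xi\mapsto\xi\circ\psi_M(\zeta)$---is essentially the argument of \cite[Proposition 2]{PW} that the paper invokes verbatim, and your treatment of (1), (3), (C1)--(C3), and (C4) (whose one non-formal step you, like the paper, delegate to \cite{PW}) is correct.

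The genuine gap is in (2), at the words ``and symmetrically''. The two inclusions are not symmetric, precisely because $A$ is not assumed cocommutative. With the convention $\psi_X(\zeta)=[C\otimes X]$, where $C$ is an extension representing $\zeta$, the first inclusion is formal: the exact functor $-\otimes N$ sends the class $[C\otimes M]$ to $[(C\otimes M)\otimes N]=[C\otimes(M\otimes N)]=\psi_{M\otimes N}(\zeta)$, so $\ker\psi_M\subseteq\ker\psi_{M\otimes N}$ and $V^s(M\otimes N)\subseteq V^s(M)$. The ``symmetric'' argument, however, applies $M\otimes-$ to $C\otimes N$ and produces the class $[M\otimes C\otimes N]$, whereas $\psi_{M\otimes N}(\zeta)$ is represented by $[C\otimes M\otimes N]$; to identify these you need a natural isomorphism $C\otimes M\cong M\otimes C$ of complexes of $A$-modules, i.e.\ cocommutativity or a braiding, which is exactly what is unavailable here. (Under the opposite convention $\psi_X(\zeta)=[X\otimes C]$ the roles of the two inclusions are interchanged; either way only one of them is formal.) This is not a presentational slip that can be patched by rearranging the argument: Benson and Witherspoon (``Examples of support varieties for Hopf algebras with noncommutative tensor products'') have exhibited finite-dimensional Hopf algebras satisfying the finite-generation hypotheses for which $V^s(M\otimes N)\not\subseteq V^s(M)\cap V^s(N)$, so no purely formal argument can deliver the second inclusion at this level of generality. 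Closing (2) requires extra input---for instance a quasitriangular structure giving $M\otimes N\cong N\otimes M$, which is available for the Drinfel'd doubles to which the paper applies this lemma, or the specific algebra structure exploited in \cite{PW}---and your proof should either invoke such a hypothesis explicitly or claim only the one-sided inclusion.
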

Following Carlson \cite{carl:vars} we can associate to a non-zero element
$\zeta \in \ext_A ^n(k,k) \cong \underline{\Hom}(\Omega^n(k), k)$ the
kernel of the corresponding map $\hat \zeta: \Omega^n(k) \to k$.
We denote this $A$-module by $L_{\zeta}$. Let  $\zeta$ be a homogeneous element in $\ext_A^{ev}(k,k)$, then
$\langle \zeta \rangle $ denotes the subvariety
consisting of all elements of $\operatorname{Proj}\ext^{ev}_{A}(k,k)$ containing $\zeta$. The proof of
\cite[Proposition 3]{PW} goes through to give
\begin{prop} \label{zeta}
For any finite-dimensional $A$-module $M$ we have
$V^s(M\otimes L_{\zeta})=V^s(M) \cap \langle \zeta \rangle$.
\end{prop}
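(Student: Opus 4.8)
The plan is to transport the defining short exact sequence of $L_\zeta$ into a statement about the action of $\zeta$ on the cohomology of $M$, and then read off the variety by a support computation. Since $\zeta\neq 0$, the representative $\hat{\zeta}\colon\Omega^n(k)\to k$ is nonzero, hence surjective because $k$ is one-dimensional, giving $0\to L_\zeta\to\Omega^n(k)\xrightarrow{\hat{\zeta}}k\to 0$. Because $A$ is a Hopf algebra, tensoring over $k$ with $M$ (diagonal action) is exact and sends projectives to projectives, since $M\otimes A\cong A^{\dim M}$ is free. Applying $M\otimes-$ and using $M\otimes\Omega^n(k)\cong\Omega^n(M)$ and $M\otimes k\cong M$ in the stable category, I obtain an exact sequence $0\to M\otimes L_\zeta\to\Omega^n(M)\to M\to 0$ (up to projective summands, which are invisible to $\ext^{>0}$).

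Next I would fix a simple module $S$ and apply the contravariant functor $\ext^*_A(-,S)$ to this sequence. Using $\ext^i_A(\Omega^n(M),S)\cong\ext^{i+n}_A(M,S)$, the long exact sequence takes the form
\[
\cdots\to\ext^i_A(M,S)\xrightarrow{\,\zeta\,}\ext^{i+n}_A(M,S)\to\ext^i_A(M\otimes L_\zeta,S)\to\ext^{i+1}_A(M,S)\xrightarrow{\,\zeta\,}\cdots
\]
the crucial point being that, under the identification above, the map induced by $\id_M\otimes\hat{\zeta}$ is Yoneda multiplication by $\psi_M(\zeta)$, that is, the action of $\zeta$ coming from the $\ext^{ev}_A(k,k)$-module structure on $\ext^*_A(M,S)$. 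This matching of the representative $\hat{\zeta}$ with the module action is the step I expect to require the most care, and it is exactly the content carried over verbatim from \cite[Proposition 3]{PW}.

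From the long exact sequence I would then extract, for each $i$, a short exact sequence of graded $\ext^{ev}_A(k,k)$-modules (up to an irrelevant grading shift)
\[
0\to\ext^*_A(M,S)/\zeta\,\ext^*_A(M,S)\to\ext^*_A(M\otimes L_\zeta,S)\to(0:_{\ext^*_A(M,S)}\zeta)\to 0 .
\]
Since support is additive over short exact sequences of finitely generated modules, $V^s(M\otimes L_\zeta,S)=\operatorname{Supp}\big(\ext^*_A(M,S)/\zeta\,\ext^*_A(M,S)\big)\cup\operatorname{Supp}(0:\zeta)$. The first term equals $V^s(M,S)\cap\langle\zeta\rangle$ by the standard fact $\operatorname{Supp}(N/\zeta N)=\operatorname{Supp}(N)\cap\langle\zeta\rangle$, while the second lies in $V^s(M,S)\cap\langle\zeta\rangle$ because $(0:\zeta)$ is a submodule of $\ext^*_A(M,S)$ annihilated by $\zeta$. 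Hence $V^s(M\otimes L_\zeta,S)=V^s(M,S)\cap\langle\zeta\rangle$.

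Finally I would take the union over all simple modules $S$ and apply part (3) of the Lemma twice:
\[
V^s(M\otimes L_\zeta)=\bigcup_S V^s(M\otimes L_\zeta,S)=\bigcup_S\big(V^s(M,S)\cap\langle\zeta\rangle\big)=\Big(\bigcup_S V^s(M,S)\Big)\cap\langle\zeta\rangle=V^s(M)\cap\langle\zeta\rangle,
\]
which is the assertion. The only genuinely delicate ingredient is the identification of the connecting map with multiplication by $\zeta$; the remaining steps are exactness of $M\otimes-$, the stable isomorphism $M\otimes\Omega^n(k)\cong\Omega^n(M)$, and routine commutative algebra of supports.
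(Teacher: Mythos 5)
Your proof is correct and is essentially the same argument the paper relies on: the paper simply invokes the proof of \cite[Proposition 3]{PW}, which is exactly this chain of steps (the sequence $0\to M\otimes L_\zeta\to\Omega^n(M)\to M\to 0$ up to projectives, identification of the induced map with multiplication by $\psi_M(\zeta)$, the support computation via $E/\zeta E$ and $(0:_E\zeta)$, and the union over simples using part (3) of the Lemma). The only delicate point, matching the map $\id_M\otimes\hat{\zeta}$ with the $\ext^{ev}_A(k,k)$-action, is precisely the step you flagged, and it carries over from \cite{PW} as you say.
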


\end{subsection}

\begin{subsection}{The Heller translate and reciprocity}

Let $\Lambda$ be a finite-dimensional Hopf algebra over a field $k$. Let $M$ be an indecomposable $\Lambda$-module which has period 2, so that there is an exact sequence
\begin{equation} \label{Mseq} 0 \rightarrow M \rightarrow X_1 \rightarrow X_0 \rightarrow M \rightarrow 0 \end{equation}
where the $X_i$ are projective.  All even-dimensional Ext-groups $\ext_\Lambda ^{2n} (M,M)$ are isomorphic because $\ext_\Lambda ^{m} (A,B) \cong \ext _\Lambda ^{m-1} (\Omega(A), B)$, and in fact they are all isomorphic to the stable endomorphism ring $\stend(M)$.  It follows that $\Omega^2$ induces an automorphism of $\stend(M)$.  Carlson gives a condition in \cite[Proposition 3.6]{carl:vars} for this automorphism to be the identity.
His proof was in the group algebra case, but it goes through for arbitrary Hopf algebras.

\begin{lem} \label{omegaf} Let $f \in \End(M)$ represent an element $\xi \in \ext _\Lambda ^2 (M,M)$ in the image of the map $- \otimes M : \ext_\Lambda ^2 (k,k) \rightarrow \ext _\Lambda ^2 (M,M)$.  Then $\Omega ^2 (\underline{f}) = \underline{f}$, where the underline denotes the image in the stable category. \end{lem}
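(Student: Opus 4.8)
The plan is to transcribe Carlson's proof of \cite[Proposition 3.6]{carl:vars}, isolating the two facts about $kG$ he uses and replacing them by their Hopf analogues. Over a Hopf algebra the functor $-\otimes M$ is exact and sends projectives to projectives, so it descends to the stable category; tensoring a projective resolution of $k$ with $N$ then produces, for every $N$, a stable isomorphism $c_N\colon\Omega^n N\xrightarrow{\sim}\Omega^n k\otimes N$ that is natural in $N$. These are the only inputs specific to the Hopf setting; the rest is naturality.

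First I would make the hypothesis explicit. Write $\xi=\psi_M(\zeta)$ with $\zeta\in\ext^2_\Lambda(k,k)$ and represent $\zeta$ by a stable map $\hat\zeta\colon\Omega^2 k\to k$ via $\ext^2_\Lambda(k,k)\cong\underline{\Hom}(\Omega^2 k,k)$. Then $\hat\xi:=(\hat\zeta\otimes 1_M)\circ c_M\colon\Omega^2 M\to M$ represents $\xi$ under $\ext^2_\Lambda(M,M)\cong\underline{\Hom}(\Omega^2 M,M)$, and, writing $\mu\colon M\xrightarrow{\sim}\Omega^2 M$ for the fixed periodicity isomorphism, the class $\underline f\in\stend(M)$ is $\hat\xi\circ\mu$. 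The automorphism of $\stend(M)$ under consideration is the Heller functor $\Omega^2$ conjugated by $\mu$, so the assertion $\Omega^2(\underline f)=\underline f$ amounts to saying that $\mu$ intertwines $\underline f$ and $\Omega^2(\underline f)$, that is $\Omega^2(\underline f)\circ\mu=\mu\circ\underline f$.

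The heart of the matter is to show that the Heller shift carries the class of $\hat\xi$ to the class of $\hat\xi':=(\hat\zeta\otimes 1_{\Omega^2 M})\circ c_{\Omega^2 M}$, the map representing $\psi_{\Omega^2 M}(\zeta)$; equivalently $\Omega^2\circ\psi_M=\psi_{\Omega^2 M}$, which says that the shift commutes with the $\ext^2_\Lambda(k,k)$-action and restricts to the identity on $\ext^2_\Lambda(k,k)$ (the latter because $\Omega$ merely reindexes $\ext^*_\Lambda(k,k)$). Granting $\Omega^2(\hat\xi)=\hat\xi'$, the intertwining follows from a short diagram chase: naturality of $c$ applied to $\mu$ gives $c_{\Omega^2 M}\circ\Omega^2(\mu)=(1_{\Omega^2 k}\otimes\mu)\circ c_M$, and bifunctoriality of $\otimes$ turns $(\hat\zeta\otimes 1_{\Omega^2 M})\circ(1_{\Omega^2 k}\otimes\mu)$ into $\mu\circ(\hat\zeta\otimes 1_M)$, whence $\Omega^2(\underline f)\circ\mu=\hat\xi'\circ\Omega^2(\mu)\circ\mu=\mu\circ\hat\xi\circ\mu=\mu\circ\underline f$, as required. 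Notably this uses only that $\otimes$ is a bifunctor, so no braiding or symmetry of the tensor product is needed, which is what makes the argument survive in the non-cocommutative case.

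The step I expect to cost the most care is the identity $\Omega^2(\hat\xi)=\hat\xi'$, i.e. that the Heller shift commutes with $\psi_M$ and is trivial on $\ext^*_\Lambda(k,k)$. Making this precise means pinning down the natural isomorphisms $c_N$ together with the projective summands they conceal, and checking that applying the Heller functor to $(\hat\zeta\otimes 1_M)\circ c_M$ reproduces the analogous construction for $\Omega^2 M$ with the \emph{same} representative $\hat\zeta$. This is exactly the bookkeeping Carlson performs implicitly in the group-algebra setting; once it is in place the remainder of the argument is formal.
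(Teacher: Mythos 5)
Your reduction of the statement to the intertwining identity $\Omega^2(\underline f)\circ\mu=\mu\circ\underline f$ is correct, and the closing diagram chase is valid \emph{granted} the step you defer. But that deferred step --- $\Omega^2(\hat\xi)=\hat\xi'$, i.e.\ $\Omega^2\circ\psi_M=\psi_{\Omega^2M}$ --- is not bookkeeping; it is the entire difficulty of the non-cocommutative case, and your proposal contains no argument for it. To see what it really says, model $\Omega^2N$ by the second syzygy of $F_*\otimes N$, so that $c_N$ is the identity. Lifting $\hat\xi=\hat\zeta\otimes\id_M$ over resolutions gives $\Omega^2(\hat\xi)=\Omega^2(\hat\zeta)\otimes\id_M$, and lifting $\hat\zeta$ itself via the chain map $\id_{F_*}\otimes\,\hat\zeta\colon F_*\otimes\Omega^2(k)\to F_*$ shows that, under the canonical stable isomorphism $w\colon\Omega^4(k)\to\Omega^2(k)\otimes\Omega^2(k)$, one has $\Omega^2(\hat\zeta)=(\id_{\Omega^2(k)}\otimes\,\hat\zeta)\circ w$: the Heller shift acts through the \emph{right-hand} tensor factor. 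Your $\hat\xi'$ applies $\hat\zeta$ to the \emph{left-hand} factor, $(\hat\zeta\otimes\id_{\Omega^2(k)})\circ w$. So your claim is equivalent to the assertion that $\hat\zeta\otimes\id_{\Omega^2(k)}$ and $\id_{\Omega^2(k)}\otimes\,\hat\zeta$ coincide as maps $\Omega^2(k)\otimes\Omega^2(k)\to\Omega^2(k)$ in the stable category. For $kG$, or any cocommutative Hopf algebra, this follows because the flip is a module map; for general $\Lambda$ there is no flip, and this is precisely the point at issue. What formal arguments (cup $=$ Yoneda plus Eckmann--Hilton) do give is only the weak version: $\hat\eta\circ(\hat\zeta\otimes\id)\circ w=\hat\eta\circ(\id\otimes\,\hat\zeta)\circ w$ for every $\hat\eta\in\underline{\Hom}(\Omega^2(k),k)$. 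That determines the difference of the two maps only up to an element of $\{a\in\ext^2_\Lambda(k,k):\ext^2_\Lambda(k,k)\cdot a=0\}$, which need not vanish, so equality of stable maps does not follow. Contrary to your closing remark, it is exactly the absence of a symmetry of the tensor product that blocks the argument.

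The paper's proof is engineered to avoid this obstacle, and the contrast is instructive. It never attempts to show that $\Omega^2$ commutes with the $\ext^*_\Lambda(k,k)$-action at the level of stable maps. Instead it computes the single class $\xi^2$ in two ways: as $\Omega^2(\underline f)\circ\underline f$ (a general consequence of periodicity and Yoneda splicing) and, via an explicit chain-level diagram exploiting the special form $f=(g\otimes\id)\circ\mu$, as $\underline f^{\,2}$. This yields only the multiplicative identity $\Omega^2(\underline f)\circ\underline f=\underline f^{\,2}$ --- an equality of Yoneda products, which is within reach of interchange-type arguments --- and then cancels $\underline f$ using the fact that $\End(M)$ is local because $M$ is indecomposable (replacing $f$ by $\id+f$ to make it invertible). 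Note that your argument never uses indecomposability of $M$, whereas it is the engine of the paper's proof; that your route would prove a strictly stronger, hypothesis-free statement is a symptom of the gap. To repair your proposal you would need an honest proof of the left/right symmetry above, which for non-cocommutative $\Lambda$ does not follow from naturality and exactness of $-\otimes M$ alone.
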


\begin{proof}
Let $(\partial, F_*)$ be a $\Lambda$-projective resolution of $k$.  Then $(\partial \otimes \id, F_i \otimes M)$ is a projective resolution of $M$, and there is a chain map $\theta$  between this resolution and (\ref{Mseq}):

\[ \xymatrix{
\Omega ^2 (k) \otimes M \ar@{^{(}->}[r] & F_1 \otimes M \ar[r]^{\partial_1 \otimes \id} & F_0 \otimes M \ar[r]^{\epsilon \otimes \id} & M \ar@{=}[d] \\
M \ar[u]^\mu \ar[r]^{\iota} & X_1 \ar[u]^{\theta_1} \ar[r]^d & X_0 \ar[u]^{\theta_0} \ar[r]^\pi & M . }
\]

Here $\mu$ is a split monomorphism.  The condition on $f$ means that $f = (g \otimes \id) \circ \mu$ for some $g \in \Hom_\Lambda (\Omega^2(k), k)$, and we assume $g \neq 0$.  We then get a sequence representing $f$ as follows:

\[\xymatrix{
0 \ar[r] & \Omega^2 (k) \otimes M \ar[r] \ar[d]^{g \otimes \id} & F_1 \otimes M \ar[r] \ar[d]^{q \otimes \id} & F_0 \otimes M \ar[r] \ar@{=}[d]& M \ar[r] \ar@{=} [d]& 0 \\
0 \ar[r] & M \ar[r] ^\alpha & \frac{F_1}{\ker g} \otimes M \ar[r]^{\bar{\partial_1} \otimes \id} & F_0 \otimes M \ar[r] & M \ar[r] & 0
}\]
where $q$ is the quotient map and $\alpha(m) = q(y) \otimes m$ where $y$ is some fixed element of $\Omega^2(k)$ such that $g(y)=1$.  Now $\xi^2$ is represented by $\Omega^2(f) \circ f$, and also by the Yoneda product of two copies of the bottom row of the diagram above.  As a map $M \rightarrow M$ the Yoneda composition corresponds to $f^2$ because the following diagram commutes:

\[\xymatrix{
 M \ar@{^{(}->}[r] \ar[d]^{f^2} & X_1 \ar[r] \ar[d]^{(q\otimes f) \circ \theta_1} & X_0 \ar[r]^{\iota \circ \pi} \ar[d]^{(\id \otimes f) \circ \theta_0} & X_1 \ar[r] \ar[d]^{q \circ \theta_1}& X_0 \ar@{->>}[r] \ar[d]^{\theta_0} & M  \\
 M \ar@{^{(}->}[r]^\alpha & \frac{F_1}{\ker g} \otimes M \ar[r]^{\bar{\partial_1} \otimes \id} & F_0\otimes M \ar[r]^{\alpha \circ (\epsilon \otimes \id)} & \frac{F_1}{\ker g} \otimes M \ar[r]^{\bar{\partial_1} \otimes \id} & F_0 \otimes M \ar@{->>}[r]^{\epsilon \otimes \id} & M  \ar@{=}[u]
}\]
It follows that $\Omega^2 ( f ) \circ f = f^2$ in the stable category.  Replacing $f$ by $\id+f$ if necessary we may assume that $f$ is invertible (recall that $M$ is indecomposable so its endomorphism ring is local).  The result follows.
\end{proof}

Our second lemma is a reciprocity result for Hopf algebras.

\begin{lem} \label{reciprocity}
Let $\Gamma$ be a subalgebra of $\Lambda$, so $\Gamma$ has a ``trivial module'' $k_\Gamma$ defined by the counit $\epsilon$ of $\Lambda$ restricted to $\Gamma$.  Then for any $\Lambda$-module $M$ we have
\[ (M|_\Gamma)\!\! \uparrow ^\Lambda \cong (k_\Gamma \!\! \uparrow ^\Lambda )\otimes M. \]
\end{lem}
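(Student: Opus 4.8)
The plan is to adapt the classical tensor identity for group algebras, where the isomorphism $(M|_H)\uparrow^G\cong (k_H\uparrow^G)\otimes M$ is given by $g\otimes m\mapsto gH\otimes gm$ with inverse $gH\otimes m\mapsto g\otimes g^{-1}m$, replacing the group elements $g$ and $g^{-1}$ by the comultiplication $\Delta$ and antipode $S$ of $\Lambda$. Writing $\Delta(\lambda)=\sum\lambda_{(1)}\otimes\lambda_{(2)}$ and letting $\overline{\lambda}$ be the image of $\lambda$ in $k_\Gamma\uparrow^\Lambda=\Lambda\otimes_\Gamma k_\Gamma$, I would define
\[ \phi\colon (M|_\Gamma)\uparrow^\Lambda\to(k_\Gamma\uparrow^\Lambda)\otimes M,\qquad \phi(\lambda\otimes m)=\sum\overline{\lambda_{(1)}}\otimes\lambda_{(2)}m, \]
together with the candidate inverse
\[ \psi\colon (k_\Gamma\uparrow^\Lambda)\otimes M\to(M|_\Gamma)\uparrow^\Lambda,\qquad \psi(\overline{\lambda}\otimes m)=\sum\lambda_{(1)}\otimes S(\lambda_{(2)})m. \]

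Two of the three verifications are routine. That $\phi$ is $\Lambda$-linear is immediate from $\Delta$ being an algebra map together with the fact that $\Lambda$ acts on $k_\Gamma\uparrow^\Lambda$ by left multiplication: for $a\in\Lambda$ both $\phi(a\cdot(\lambda\otimes m))$ and $a\cdot\phi(\lambda\otimes m)$ unwind to $\sum\overline{a_{(1)}\lambda_{(1)}}\otimes a_{(2)}\lambda_{(2)}m$. That $\psi$ and $\phi$ are mutually inverse is a formal consequence of the counit and antipode axioms; for instance $\psi\phi(\lambda\otimes m)=\sum\lambda_{(1)}\otimes S(\lambda_{(2)})\lambda_{(3)}m=\sum\lambda_{(1)}\epsilon(\lambda_{(2)})\otimes m=\lambda\otimes m$, and the reverse composite collapses in the same way using $\sum\lambda_{(1)}S(\lambda_{(2)})=\epsilon(\lambda)$. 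Indeed $\phi$ and $\psi$ are nothing but the descents of the standard linear automorphism $\lambda\otimes m\mapsto\sum\lambda_{(1)}\otimes\lambda_{(2)}m$ of $\Lambda\otimes_k M$ and of its inverse.

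The step I expect to be the main obstacle is well-definedness: one must check that $\phi$ respects the defining relation $\lambda\gamma\otimes m=\lambda\otimes\gamma m$ of the balanced tensor product over $\Gamma$. Expanding,
\[ \phi(\lambda\gamma\otimes m)=\sum\overline{\lambda_{(1)}\gamma_{(1)}}\otimes\lambda_{(2)}\gamma_{(2)}m, \]
and to reduce this to $\sum\overline{\lambda_{(1)}}\otimes\lambda_{(2)}\gamma m=\phi(\lambda\otimes\gamma m)$ I would absorb the first leg of $\Delta(\gamma)$ into the bar via $\overline{x\gamma'}=\epsilon(\gamma')\overline{x}$ and then rebuild $\gamma$ through the counit identity $\sum\epsilon(\gamma_{(1)})\gamma_{(2)}=\gamma$. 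The subtlety is that $\overline{x\gamma'}=\epsilon(\gamma')\overline{x}$ is only available when $\gamma'\in\Gamma$, so the calculation requires $\gamma_{(1)}\in\Gamma$ for every $\gamma\in\Gamma$; that is, it needs $\Gamma$ to be compatible with the coproduct in the sense $\Delta(\Gamma)\subseteq\Gamma\otimes\Lambda$, and well-definedness of $\psi$ likewise rests on this together with the antipode axiom $\sum\gamma_{(1)}S(\gamma_{(2)})=\epsilon(\gamma)$. I would therefore isolate this compatibility as the point on which the argument turns and verify it directly for each subalgebra $\Gamma$ that occurs later in the paper.
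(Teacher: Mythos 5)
Your $\phi$ and $\psi$ are exactly the maps in the paper's own proof, and your routine verifications (linearity, the counit/antipode computations showing the two maps are mutually inverse) match it step for step, so the approach is the same. But the ``main obstacle'' you single out is a genuine issue, and it is worth saying plainly that it is present in the printed proof as well: the paper's well-definedness computation passes from $\sum(\lambda_{(1)}\gamma_{(1)}\otimes_\Gamma 1)\otimes\lambda_{(2)}\gamma_{(2)}m$ to $\sum(\lambda_{(1)}\otimes\epsilon(\gamma_{(1)}))\otimes\lambda_{(2)}\gamma_{(2)}m$, and sliding $\gamma_{(1)}$ across $\otimes_\Gamma$ is precisely the step that requires $\gamma_{(1)}\in\Gamma$, i.e.\ your hypothesis $\Delta(\Gamma)\subseteq\Gamma\otimes\Lambda$. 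For a bare subalgebra of a Hopf algebra this can fail, so the lemma really does need a coideal-type hypothesis; on this point your write-up is more careful than the paper's.

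One warning about your closing plan, however. If you test your condition on the subalgebras to which the paper actually applies the lemma, namely $H_{\alpha\beta}=\langle g,G,\alpha x+\beta X\rangle$ inside $D(\Lambda_2)$, it fails on the side you chose whenever $\alpha\beta\neq 0$: here $\Delta(\alpha x+\beta X)=1\otimes(\alpha x+\beta X)+\alpha x\otimes g+\beta X\otimes G$, and since $g$, $G$, $\alpha x+\beta X$ are linearly independent while $x,X\notin H_{\alpha\beta}$, the first tensor legs cannot be rewritten so as to lie in $H_{\alpha\beta}$; what holds instead is the mirror condition $\Delta(H_{\alpha\beta})\subseteq\Lambda\otimes H_{\alpha\beta}$. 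Consequently the form of the lemma that actually covers the paper's applications is the mirrored one mentioned in the sentence following the proof, $(M|_\Gamma)\!\!\uparrow^\Lambda\cong M\otimes(k_\Gamma\!\!\uparrow^\Lambda)$, proved by the mirrored map $\lambda\otimes_\Gamma m\mapsto\sum\lambda_{(1)}m\otimes(\lambda_{(2)}\otimes_\Gamma 1)$, whose well-definedness uses $\gamma_{(2)}\in\Gamma$ together with $\sum\gamma_{(1)}\epsilon(\gamma_{(2)})=\gamma$. So you should state the hypothesis as $\Delta(\Gamma)\subseteq\Lambda\otimes\Gamma$ and prove the mirrored isomorphism (or prove both one-sided versions); with that adjustment your argument is complete and supplies the version needed for the subalgebras $H_{\alpha\beta}$ later in the paper.
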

\begin{proof}
The map is given by
\[ \lambda \otimes _ \Gamma m \mapsto \sum (\lambda _{(1)} \otimes _\Gamma 1) \otimes \lambda_{(2)} m. \]
It is well-defined because for $\lambda \in \Lambda$ and $\gamma \in \Gamma$ we have
\begin{align*} \lambda\gamma \otimes _\Gamma m \mapsto  & \sum (\lambda_{(1)} \gamma_{(1)} \otimes _\Gamma 1) \otimes \lambda_{(2)} \gamma_{(2)} m \\
= & \sum (\lambda_{(1)} \otimes \epsilon(\gamma_{(1)})) \otimes \lambda_{(2)} \gamma_{(2)}m \\
= & \sum (\lambda_{(1)} \otimes _\Gamma 1) \otimes \lambda_{(2)} \epsilon(\gamma_{(1)}) \gamma_{(2)} m \\
= & \sum (\lambda_{(1)} \otimes _\Gamma 1) \otimes \lambda_{(2)}\gamma m .\end{align*}
which is the image of $\lambda \otimes _\Gamma \gamma m$.  It is easy to check that this is a module homomorphism with inverse
\[ (\lambda \otimes_\Gamma 1) \otimes m \mapsto \sum \lambda_{(1)} \otimes S(\lambda_{(2)}) m. \]
\end{proof}

The same result holds with $M \otimes (k_\Gamma \!\! \uparrow ^\Lambda)$ instead of $(k_\Gamma \!\! \uparrow ^\Lambda) \otimes M$, showing that these two modules are isomorphic even if $\Lambda$ is not cocommutative.

\begin{rem}
 When $\Gamma$ is a sub-Hopf algebra of $\Lambda$ there is a stronger result, that $(N \otimes M|_\Gamma)\!\! \uparrow ^\Lambda \cong (N\!\! \uparrow ^\Lambda) \otimes M$ for any $\Gamma$-module $N$.  In this case the map is given by
\[\lambda \otimes_\Gamma (n \otimes m) \mapsto \sum  (\lambda_{(1)} \otimes_\Gamma n) \otimes \lambda _{(2)} m. \]
\end{rem}

\end{subsection}

\section{Detecting projectivity} \label{detecting_proj_section}
In this section we introduce a condition on the cohomology of a
Hopf algebra $\Lambda$ that is sufficient to determine a set of
subalgebras of $\Lambda$ with the property that a $\Lambda
$-module is projective if and only if it is projective when restricted
to any subalgebra of this set. This condition will be very
useful for defining rank varieties for certain classes of Hopf
algebras.
\begin{defn}
An element $\xi$ of $\ext_\Lambda ^n (M,N)$ is of \textbf{induced type} if it can be represented by a sequence of the form
\[
 0 \rightarrow M \rightarrow L_{n-1} \otimes (k_{H_{n-1}} \! \!\uparrow ^\Lambda) \rightarrow \cdots \rightarrow L_0 \otimes (k_{H_0}\!\!\uparrow ^\Lambda ) \rightarrow N \rightarrow 0
\]
for some modules $L_0, \ldots, L_{n-1}$ and some proper subalgebras $H_0, \ldots, H_{n-1}$ of $\Lambda$.  The $H_i$ are said to be \textbf{involved} in $\xi$.

\end{defn}

\begin{defn}
 A subalgebra $\Gamma$ of $\Lambda$ is called a \textbf{flat subalgebra} if $\Lambda$ is projective as a left $\Gamma$-module.
\end{defn}

\begin{defn} A collection of subalgebras $\mathcal{A}$ of $\Lambda$ is said to \textbf{detect projectivity} if a $\Lambda$-module $M$ is projective if and only if it is projective on restriction to every $A \in \mathcal{A}$.
\end{defn}

If $\mathcal{A}$ detects projectivity every element of $\mathcal{A}$ must be a flat subalgebra of $\Lambda$.

\begin{theo} \label{detecting_thm}
Suppose there exists a two-dimensional subspace of $\ext ^n _\Lambda (k,k)$, where $n \geq 2$, such that every element of this subspace is of induced type and the subalgebras involved can be taken to be flat.  Let $\mathcal{A}$ be the collection of these flat subalgebras.  Then $\mathcal{A}$ detects projectivity.
\end{theo}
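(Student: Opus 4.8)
The plan is to prove both directions of the detection property, the hard one resting on the support-variety machinery of Section \ref{prelims} together with Lemma \ref{reciprocity}. The forward implication is routine: if $M$ is projective then it is a direct summand of a free module $\Lambda^r$, and for any $A\in\mathcal{A}$ the restriction $\Lambda^r|_A=(\Lambda|_A)^r$ is projective because $A$ is flat, so $M|_A$ is a summand of a projective $A$-module and hence projective.

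For the converse I would fix a nonzero $\zeta$ in the given two-dimensional subspace $W\subseteq\ext^n_\Lambda(k,k)$ and analyse $\psi_M(\zeta)\in\ext^n_\Lambda(M,M)$. Choosing an induced-type representative $0\to k\to L_{n-1}\otimes(k_{H_{n-1}}\!\!\uparrow^\Lambda)\to\cdots\to L_0\otimes(k_{H_0}\!\!\uparrow^\Lambda)\to k\to 0$ with the $H_i$ flat (hence $H_i\in\mathcal{A}$), and tensoring over $k$ with $M$, gives a representative of $\psi_M(\zeta)$. By Lemma \ref{reciprocity} each middle term becomes $L_i\otimes((M|_{H_i})\!\!\uparrow^\Lambda)$; since $M|_{H_i}$ is projective by hypothesis, induction carries it to a projective $\Lambda$-module, and the tensor product of a projective with any module over a Hopf algebra remains projective. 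Thus $\psi_M(\zeta)$ is represented by an $n$-fold self-extension of $M$ all of whose middle terms are projective. Dimension-shifting through the short exact sequences into which this extension factors then identifies $\psi_M(\zeta)$, under $\ext^n_\Lambda(M,M)\cong\underline{\Hom}(\Omega^n M,M)$, with a stable isomorphism $\Omega^n M\to M$; in particular $\psi_M(\zeta)$ is invertible. Tensoring the defining sequence $0\to L_\zeta\to\Omega^n k\to k\to 0$ with $M$ then exhibits $M\otimes L_\zeta$ as projective, so Proposition \ref{zeta} yields $V^s(M)\cap\langle\zeta\rangle=V^s(M\otimes L_\zeta)=\{0\}$ for every nonzero $\zeta\in W$.

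To finish I would argue by contradiction. If $M$ is not projective, then (C4) forces $V^s(M)$ to be nonempty in $\operatorname{Proj}\ext^{ev}_\Lambda(k,k)$, and hence to contain a closed point $\mathfrak p$. As $k$ is algebraically closed, $\ext^{ev}_\Lambda(k,k)/\mathfrak p$ is a one-dimensional graded domain, so its degree-$n$ component is at most one-dimensional over $k$; the images there of a basis $\zeta_1,\zeta_2$ of $W$ are therefore linearly dependent, producing a nonzero $\zeta\in W$ with $\zeta\in\mathfrak p$, that is $\mathfrak p\in\langle\zeta\rangle$. This contradicts $V^s(M)\cap\langle\zeta\rangle=\{0\}$, so $M$ must be projective. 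This step is exactly where two-dimensionality of $W$ is indispensable: at a single closed point all degree-$n$ classes are proportional, so two independent ones always yield a form vanishing there.

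I expect the main obstacle to be the middle paragraph: verifying cleanly that an $n$-fold self-extension of $M$ with projective middle terms represents an invertible class, and marshalling the Hopf-algebraic inputs (reciprocity, induction of projectives, and $\otimes$-stability of projectivity) that convert the flatness and restricted-projectivity hypotheses into that statement. The concluding geometric step is short once the support-variety formalism and the finite-generation assumptions of Section \ref{prelims} are in force.
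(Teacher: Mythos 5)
Your first two paragraphs are sound, and in fact they already contain the paper's key computation: tensoring an induced-type representative with $M$, applying Lemma \ref{reciprocity} to turn the middle terms into modules induced from projective restrictions, and concluding that $\psi_M(\zeta)$ is represented by a self-extension of $M$ with projective middle terms, hence corresponds to a stable isomorphism $\Omega^n M \to M$ whenever $\zeta \in W$ is nonzero. The problem is your concluding paragraph. There you invoke the cohomological support variety: property (C4) for $V^s$, Proposition \ref{zeta}, and the structure of closed points of $\operatorname{Proj}\ext^{ev}_\Lambda(k,k)$. All of these are developed in Section \ref{prelims} only under the standing assumptions that $\ext^{ev}_\Lambda(k,k)$ is a finitely generated algebra and that $\ext^*_\Lambda(M,N)$ is finitely generated over it, and Theorem \ref{detecting_thm} makes no such hypotheses. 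This is not a technicality you can wave through: finite generation of Hopf algebra cohomology is a famous open problem in the generality in which the theorem is stated, and for the paper's own application the module-finite-generation statement is only proved later (in Section \ref{n>2section}, via Morita equivalence). In particular, the implication ``$M$ not projective $\Rightarrow V^s(M)$ contains a closed point'' — the very first step of your contradiction — is exactly the part of (C4) that fails to be available without those assumptions. As written, your argument proves a strictly weaker theorem, and one whose extra hypotheses would undercut the purpose of this result.

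The fix is short and stays entirely inside the stable module category, which is what the paper does. Reduce to $M$ indecomposable (projectivity of $M|_A$ passes to summands), so $\stend(M)$ is a local ring with residue field $k$; note $\Omega^n M \cong M$ stably by your middle paragraph, and fix a stable isomorphism $\sigma : M \to \Omega^n M$. Then $\zeta \mapsto \psi_M(\zeta)\circ\sigma$ is a linear map $W \to \stend(M)$, and if $M$ is not projective the composite $W \to \stend(M) \to \stend(M)/\rad \cong k$ has nonzero kernel since $\dim W = 2$; any nonzero $\zeta$ in that kernel has $\psi_M(\zeta)$ a non-isomorphism, contradicting the stable-isomorphism property you established for every nonzero element of $W$. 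The paper packages this same idea slightly differently — it picks a linear combination $f$ of $f_1,f_2$ that is nilpotent in the local ring $\stend(M)$, uses Lemma \ref{omegaf} to identify Yoneda powers of the class with powers of $f$, and observes that repeated splices of a sequence with projective middle terms begin a projective resolution of $M$ and so cannot vanish unless $M$ is projective — but both versions need only locality of $\stend(M)$ and self-injectivity of $\Lambda$, no variety theory at all.
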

The proof is based on Carlson's ideas from \cite{carl:vars}.
\begin{proof}
If $M$ is projective then it is projective on restriction to any $A \in \mathcal{A}$ by the flatness condition.  So suppose $M$ is indecomposable and projective on restriction to every element of $\mathcal{A}$; we wish to show that $M$ is a projective $\Lambda$-module.

Let $\xi_1$ and $\xi_2$ span the subspace given by the hypothesis of the theorem.  These are of induced type, and by tensoring with $M$ and using Lemma \ref{reciprocity} we get a sequence
\[
 C_{\xi_1} \otimes M : 0 \rightarrow M \rightarrow L_{n-1} \otimes  (M|_{H_{\xi_{n-1}}})\!\! \uparrow ^\Lambda \rightarrow \ldots \rightarrow
L_0 \otimes (M|_{H_{\xi_1}'} )\!\!\uparrow^\Lambda \rightarrow M \rightarrow 0
\]
where $H_{\xi_1}$ and $H_{\xi_1} '$ are in $\mathcal{A}$.  The middle terms are projective because induction preserves projectivity, so $\Omega^n(M) \cong M$ and for all $l \in \Z$ we have that $\ext^{nl}_\Lambda (M,M)$ is isomorphic to $\stend(M)$.

This means that $C_{\xi_1} \otimes M$ and $C_{\xi_2} \otimes M$ correspond to elements $f_1$ and $f_2$ of $\stend(M)$.  This ring is local because $M$ is indecomposable, and so some linear combination $f = \alpha f_1 + \beta f_2$ is nilpotent. The map $f$ represents $C_{\alpha \xi_1 + \beta \xi_2} \otimes M$.  By the remark after Lemma \ref{omegaf}, powers of this correspond to powers of $f$, which are eventually zero.  But repeated Yoneda splices of $C_{\alpha \xi_1 + \beta \xi_2} \otimes M$ begin a projective resolution of $M$, so they can never be zero unless $M$ is projective, as $\Lambda $ is self-injective.
\end{proof}

Let $\mathcal{A}$ be a set of subalgebras of $\Lambda$. We define the map $V: \Lambda$-mod$  \to \mathcal{A}  \cup \{0\}$ by 
\[ V(M)=\{ 0 \} \cup \{ A \in \mathcal{A}:  M|_{A} \mbox{ is not projective }  \} \]
\begin{lem}\label{detect}
Let $\mathcal{A}$ be a set of self-injective subalgebras of $\Lambda$ that detect projectivity. Then $V$ as defined above satisfies (C1)-(C4).
\end{lem}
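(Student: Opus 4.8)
The plan is to verify each of (C1)–(C4) directly from the definition of $V(M)$ as the set of $A \in \mathcal{A}$ on which $M$ fails to be projective, together with the distinguished element $0$. The key facts I would use are: restriction $(-)|_A$ is an additive exact functor, $\mathcal{A}$ detects projectivity by hypothesis, and each $A \in \mathcal{A}$ is self-injective, so over each $A$ a module is projective if and only if it is injective, and projectivity is detected by vanishing of the Heller translate $\Omega_A$.

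For (C1), I would observe that $(M \oplus N)|_A = M|_A \oplus N|_A$, and a direct sum is projective over $A$ precisely when both summands are; hence $A \in V(M \oplus N)$ iff $A \in V(M) \cup V(N)$, giving $V(M \oplus N) = V(M) \cup V(N)$ after adjoining $0$ on both sides. For (C3), I would note that $\Omega^n(M)|_A \cong \Omega_A^n(M|_A)$ up to projective summands (restriction of a projective resolution is again a projective resolution over the flat subalgebra $A$), and that over a self-injective algebra $M|_A$ is projective if and only if $\Omega_A^{\pm 1}(M|_A)$ is; iterating shows $M|_A$ is projective iff $\Omega^n(M)|_A$ is, so $V(\Omega^n M) = V(M)$. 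For (C4), the statement $V(M) = \{0\}$ says exactly that $M|_A$ is projective for every $A \in \mathcal{A}$, which by the detecting-projectivity hypothesis is equivalent to $M$ being projective as a $\Lambda$-module; this is the one condition that is essentially a restatement of the defining property of $\mathcal{A}$.

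The main work is (C2). Given a short exact sequence $0 \to M_1 \to M_2 \to M_3 \to 0$ of $\Lambda$-modules, restriction to any $A \in \mathcal{A}$ yields a short exact sequence of $A$-modules (restriction is exact). I would fix $A$ and suppose $A \notin V(M_j) \cup V(M_s)$, i.e. $M_j|_A$ and $M_s|_A$ are both projective over $A$; the goal is to show $M_i|_A$ is projective, so that $A \notin V(M_i)$. The cleanest route is to treat the three cases $i = 1, 2, 3$ uniformly via the long exact sequence in $\ext_A^*(-, L)$ for an arbitrary $A$-module $L$: if two of the three restrictions have vanishing higher $\ext$ (equivalently, are projective over the self-injective algebra $A$), then the third has vanishing higher $\ext$ as well, hence is projective. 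For $i = 2$ this is immediate since projectivity is closed under extensions; for $i = 1$ or $i = 3$ one uses that over a self-injective algebra projective equals injective, so the relevant $\ext^{\geq 1}$ or $\ext^{\geq 2}$ terms vanish and the remaining term is forced to vanish too.

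I expect the subtlety in (C2) and (C3) to lie in handling the non-uniqueness of $\Omega$ up to projective summands and in correctly invoking self-injectivity of $A$ (which supplies projective $=$ injective and makes the Heller shift well-behaved), rather than in any deep computation; the argument is essentially Carlson's, transported to the restricted setting. The hypothesis that the subalgebras in $\mathcal{A}$ are self-injective is exactly what is needed to run the $\Omega$-shift and the $\ext$-vanishing comparisons, and I would flag at the outset that it, together with the detecting-projectivity property, drives the whole proof.
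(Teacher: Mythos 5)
Your proof is correct, and its skeleton matches the paper's: (C1) read off from the definition, (C4) as a restatement of the detection hypothesis, self-injectivity of the members of $\mathcal{A}$ carrying (C2), and (C2)-plus-restriction-of-projectives carrying (C3). The differences are in execution. For (C2) the paper argues more directly than you do: if two of the three restricted modules are projective, then since projective equals injective over the self-injective algebra $A$, the restricted short exact sequence splits outright, so the third module is a direct summand (or the direct sum) of projective $A$-modules, hence projective. Your long-exact-sequence argument proves the same thing, but note that in the case $i=3$ the sequence only hands you $\ext_A^{\geq 2}(M_3|_A,L)=0$; the remaining term $\ext_A^1(M_3|_A,L)$ is killed precisely because $M_1|_A$ is injective, i.e.\ because the sequence splits --- so your Ext argument is the splitting argument in disguise, with extra machinery. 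For (C3) the paper avoids commuting $\Omega$ with restriction: it inducts on $n$ using $0 \to \Omega(M) \to P \to M \to 0$, observes that $P|_A$ is projective, and quotes (C2); your route via $\Omega^n(M)|_A \cong \Omega_A^n(M|_A) \oplus (\mathrm{projective})$ is also fine but requires the same input plus bookkeeping of projective summands (and, for negative $n$, the fact that $\Lambda$ is Frobenius so injectives restrict to injectives). One point you should make explicit rather than assume: flatness of $A$ is not a hypothesis of the lemma --- it follows from detection applied to the projective module $\Lambda$ itself, as the paper records just before the statement.
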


\begin{proof}
Property (C1) follows immediately from the definition, and (C4) holds by hypothesis. As any $A \in \mathcal{A}$ is self-injective, we have that the
exact sequence \[0 \to M_1|_A \to M_2|_A \to M_3|_A \to 0\] splits
if any two modules $M_s|_A$, $M_j|_A$ are projective. Then
$M_i|_A$ is also projective and therefore (C2) holds. Part (C3)
is proved by induction. For $n=1$ we consider the exact sequence $0
\to \Omega(M) \to P \to M \to 0$, where $P$ is a projective cover
of $M$. Then $P|_A$ is projective and (C2) proves the result.
\end{proof}

\section{The Drinfel'd double of the Taft algebra when $n=2$}

We apply the results of the previous section to the case of the
Drinfel'd double $D(\Lambda_2)$ of the Taft algebra when $n=2$,
over an algebraically closed field $k$ of characteristic not equal
to $2$. This algebra is given by generators and relations as follows:
\begin{eqnarray*} D(\Lambda_2) & = & \langle x,X,g,G | x^2,\  X^2,\  g^2=1,\  G^2=1,\  gG=Gg,\  gx=-xg, \\ & & gX=-Xg,\  Gx=-xG,\  GX=-XG,\  xX+Xx= 1-gG \rangle. \end{eqnarray*}
$D(\Lambda_2)$ has a Hopf structure with respect to which the elements $g$ and $G$ are grouplike, and the coproduct of $x$ and $X$ is given by
\begin{eqnarray*}
\Delta(x) &=& 1 \otimes x + x \otimes g \\
\Delta(X) &=& 1 \otimes X + X \otimes G
\end{eqnarray*}
The counit sends $g$ and $G$ to $1$ and kills $x$ and $X$.  

\begin{lem} \label{semisimple_block}
The elements  $f_+=(1/2)(1+gG) $ and $f_-=(1/2)(1-gG)$ are central primitive idempotents in $D(\Lambda_2)$, and the block $D(\Lambda_2)f_-$ is semisimple. 
\end{lem}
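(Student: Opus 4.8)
The plan is to verify the three claims in turn: that $f_+$ and $f_-$ are idempotents, that they are central, and that the resulting block $D(\Lambda_2)f_-$ is semisimple. For the first claim I would begin by computing $(gG)^2$. Since $g^2 = G^2 = 1$ and $gG = Gg$ by the relations, we have $(gG)^2 = g^2 G^2 = 1$. From this, a direct expansion shows $f_{\pm}^2 = \tfrac14(1 \pm gG)^2 = \tfrac14(1 \pm 2gG + (gG)^2) = \tfrac14(2 \pm 2gG) = f_{\pm}$, using that $\operatorname{char} k \neq 2$ so that $1/2$ makes sense. One checks orthogonality and completeness similarly: $f_+ f_- = \tfrac14(1 - (gG)^2) = 0$ and $f_+ + f_- = 1$.

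\textbf{Centrality.} The central point is to show $gG$ is a central element, for then $f_\pm$, being polynomials in $gG$, are automatically central. It suffices to check that $gG$ commutes with each algebra generator. It commutes with $g$ and $G$ trivially. For $x$, I would use $gx = -xg$ and $Gx = -xG$, giving $(gG)x = g(Gx) = -g x G = -(gx)G = xgG = x(gG)$; the two sign changes cancel. The identical computation works for $X$ using $gX = -Xg$ and $GX = -XG$. Hence $gG$ is central, and so are $f_+$ and $f_-$. Primitivity of each idempotent will follow once we understand the block structure; in particular, once $D(\Lambda_2)f_-$ is shown to be semisimple it remains to see that the idempotent is primitive, which amounts to identifying $f_-$ with a single simple block or verifying it cannot be split further as a central idempotent.

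\textbf{Semisimplicity of the block.} This is the step I expect to be the main obstacle, since it requires genuine structural information rather than symbol manipulation. In the block $D(\Lambda_2)f_-$ we have $gG = -1$, so the defining relation $xX + Xx = 1 - gG$ becomes $xX + Xx = 2$. Combined with $x^2 = X^2 = 0$, this is a Clifford-type relation: setting (up to the scalar $2 \neq 0$) $x$ and $X$ to be square-zero elements whose anticommutator is a nonzero scalar forces the subalgebra they generate to behave like a matrix algebra rather than a local algebra. Concretely I would argue that in any module over this block, the operator $xX$ satisfies $(xX)^2 = x(Xx)X = x(2 - xX)X = 2xX - x^2 X^2 = 2xX$, so $xX$ acts with eigenvalues in $\{0,2\}$ and $\tfrac12 xX$ is an idempotent; this splits the radical and prevents nilpotent Jacobson radical elements from the $x, X$ relations, which is the mechanism behind semisimplicity. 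To finish rigorously I would either exhibit $D(\Lambda_2)f_-$ explicitly as a product of matrix algebras over $k$ by writing down a complete set of matrix units built from $g$, $x$, $X$ and the idempotent $\tfrac12 xX$, or invoke the known block decomposition of $D(\Lambda_2)$ from the representation-theoretic analysis in \cite{EGST}, where this block is identified as the semisimple part. The honest difficulty is bookkeeping: one must track how the group-like elements $g, G$ (which square to $1$ and have order-$2$ eigenspaces) interact with the Clifford pair $x, X$ to pin down the exact matrix dimensions, and confirm that no non-split extensions survive.
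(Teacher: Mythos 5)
Your verification that $f_\pm$ are orthogonal central idempotents is complete and correct (the paper treats this as routine and omits it), and your identification of the Clifford-type relation $xX+Xx=2$ on the block is exactly the paper's key observation. The genuine gap is that you stop there: producing the idempotent $\tfrac12 xX$ and noting $(xX)^2=2xX$ does not prove that the radical of the $8$-dimensional algebra $D(\Lambda_2)f_-$ vanishes --- an algebra can contain many idempotents and still have nonzero radical --- and the two ways you offer to ``finish rigorously'' are, respectively, the entire content of the proof (left undone) and a citation to \cite{EGST} (which replaces the proof by a quotation). The paper's completion is short and is precisely the bookkeeping you set aside. Put $E=xf_-$ and $F=Xf_-$; then $E^2=F^2=0$ and $EF+FE=2f_-$, so $\langle E,F\rangle$ is the Clifford algebra of a nondegenerate binary form, isomorphic to $M_2(k)$ by sending $E$ to the matrix unit $e_{21}$ and $F$ to $2e_{12}$. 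Next the group-like elements must be absorbed, which is the step your sketch never confronts: on the block $gG$ acts as $-1$, so $Gf_-=-gf_-$, and hence $D(\Lambda_2)f_-=\langle E,F\rangle\rtimes\langle gf_-\rangle$, a crossed product of $M_2(k)$ by a group of order $2$. Since $\operatorname{char}k\neq 2$ this is semisimple (Maschke for crossed products; alternatively, conjugation by $gf_-$ acts on $\langle E,F\rangle\cong M_2(k)$ as the inner automorphism given by $\operatorname{diag}(1,-1)$, so the crossed product untwists to $M_2(k)\otimes k\langle g\rangle$), giving $D(\Lambda_2)f_-\cong M_2(k)\oplus M_2(k)$.

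One further caution about the claim you postponed: you assert that primitivity of $f_-$ ``will follow once we understand the block structure,'' but the block structure refutes it rather than confirms it --- since $D(\Lambda_2)f_-\cong M_2(k)\oplus M_2(k)$, the idempotent $f_-$ is a sum of two orthogonal central idempotents and so is not primitive in the center. The paper's own statement and proof are equally silent on this point, so you are in good company, but a deferred claim should not be one that the deferred analysis contradicts.
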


\begin{proof}
 Only the semisimplicity needs checking.  If $E = xf_-$ and $F=Xf_-$ then $EF + FE = 2f_-$, so that the subalgebra generated by $E$ and $F$ is isomorphic to $M_2(k)$ via 
\[ E \mapsto \left(\begin{array}{ll} 0&0\\1&0 \end{array}\right) \,\,\, F \mapsto \left(\begin{array}{ll} 0&2\\0&0 \end{array}\right).\] 
Since $D(\Lambda_2)f_- = \langle E, F \rangle \rtimes \langle gf_- \rangle $ and $k\langle g \rangle$ is semisimple, we have that $D(\Lambda_2)f_- \cong M_2(k)\oplus M_2(k)$.
\end{proof}

For $\alpha, \beta \in k$ not both zero we define $H_{\alpha \beta}$ to be $\langle g,G,\alpha x + \beta X \rangle$.  The reason for introducing these subalgebras is that they verify the conditions of Theorem \ref{detecting_thm} for $D(\Lambda_2)$, which we will now check.

\begin{lem} $D(\Lambda_2)$ is free as a left $H_{\alpha \beta}$-module. \end{lem}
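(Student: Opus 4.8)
The plan is to exhibit $D(\Lambda_2)$ as a free left $H_{\alpha\beta}$-module of rank $2$ by writing down an explicit free decomposition. First I would fix the PBW-type basis $\{g^aG^bx^cX^d : a,b,c,d\in\{0,1\}\}$, so that $\dim_k D(\Lambda_2)=16$. Writing $y=\alpha x+\beta X$, the defining relations give $gy=-yg$, $Gy=-yG$ and $y^2=\alpha\beta(1-gG)$, so $y$ anticommutes with the grouplikes and $y^2$ lies in $\langle g,G\rangle$. Hence $H_{\alpha\beta}$ is spanned by the eight elements $\{g^aG^b,\ g^aG^by : a,b\in\{0,1\}\}$, and these are linearly independent (the four group elements are independent, and $y,gy,Gy,gGy$ sit in distinct group-graded components), so $\dim_k H_{\alpha\beta}=8$. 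Since $16/8=2$, freeness will follow once I produce a rank-$2$ free decomposition.

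Next I would choose $w=\gamma x+\delta X$ with $\Delta:=\alpha\delta-\beta\gamma\neq 0$, so that $\{y,w\}$ is a basis of $\Span\{x,X\}$, and claim $D(\Lambda_2)=H_{\alpha\beta}\oplus H_{\alpha\beta}w$. The only product that needs computing is $yw=\Delta\,xX+\beta\gamma(1-gG)$, obtained from $x^2=X^2=0$ and $Xx=1-gG-xX$. Multiplying on the left by $g$, $G$ and $gG$ then expresses each $g^aG^byw$ as $\Delta\,g^aG^bxX$ plus a correction term lying in $\langle g,G\rangle$. Together with the eight basis elements $g^aG^b$ and $g^aG^by$ of $H_{\alpha\beta}$, this produces the list of sixteen elements $\{h\cdot 1,\ h\cdot w\}$ as $h$ runs over the basis of $H_{\alpha\beta}$.

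To finish I would prove these sixteen elements are linearly independent using the filtration $F_\bullet$ of $D(\Lambda_2)$ by the number of letters from $\{x,X\}$. In the associated graded, the degree-$0$ piece carries the group elements $\{g^aG^b\}$; the degree-$1$ piece is spanned by the leading terms of $g^aG^by$ and $g^aG^bw$, which are independent precisely because $\alpha\delta-\beta\gamma=\Delta\neq 0$; and the degree-$2$ piece is spanned by the leading terms $\Delta\,g^aG^bxX$ of the elements $g^aG^byw$. A vanishing linear combination of the sixteen elements must then vanish degree by degree: the degree-$2$ leading terms force the coefficients of the $g^aG^byw$ to be zero, the degree-$1$ leading terms kill the coefficients of $g^aG^by$ and $g^aG^bw$, and what remains is a relation among $\{g^aG^b\}$. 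Hence the sixteen elements form a basis, the map $h\mapsto hw$ is an $H_{\alpha\beta}$-module isomorphism onto $H_{\alpha\beta}w$, and $D(\Lambda_2)\cong H_{\alpha\beta}\oplus H_{\alpha\beta}$ is free of rank $2$.

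The main bookkeeping obstacle is that the elements $g^aG^byw$ are not homogeneous for the filtration. The key point that makes the argument go through is that their top-degree parts $\Delta\,g^aG^bxX$ are already linearly independent, so the lower-degree corrections in $\langle g,G\rangle$ cause no interference once the independence check is organized strictly by filtration degree.
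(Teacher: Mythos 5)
Your proof is correct, and in outline it is the same as the paper's: exhibit $D(\Lambda_2)$ as $H_{\alpha\beta}\oplus H_{\alpha\beta}\cdot w$ for a degree-one element $w$, with the second summand isomorphic to $H_{\alpha\beta}$ as a left module. The substantive difference is the choice of $w$, and it matters. The paper fixes $w=\beta x-\alpha X$ and calls the decomposition ``easy to verify''; you allow any $w=\gamma x+\delta X$ with $\alpha\delta-\beta\gamma\neq 0$ and then verify independence by the filtration-by-$\{x,X\}$-degree argument. Your extra flexibility is not cosmetic: for the paper's choice one has $\alpha\delta-\beta\gamma=-(\alpha^2+\beta^2)$, which vanishes exactly when $\beta=\pm i\alpha$ (with $i^2=-1$), and since $k$ is algebraically closed of characteristic $\neq 2$ such lines genuinely occur. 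On them $\beta x-\alpha X=\pm i(\alpha x+\beta X)$ already lies in $H_{\alpha\beta}$, so $H_{\alpha\beta}\cdot(\beta x-\alpha X)\subseteq H_{\alpha\beta}$ and the paper's claimed decomposition fails, while these isotropic $(\alpha,\beta)$ are not excluded anywhere (the rank variety ranges over all nonzero $(\alpha,\beta)\in k^2$). So your proof not only fills in the verification the paper omits, it repairs a genuine gap in the paper's own argument. One small streamlining: having shown $\dim_k H_{\alpha\beta}=8$ and that the sixteen elements span, injectivity of $h\mapsto hw$ follows from the dimension count alone, but your explicit leading-term bookkeeping is perfectly sound.
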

\begin{proof}
Put $H = H_{\alpha \beta}$.  It is easy to verify that 
\[ D(\Lambda_2)|_{H} = H \oplus H\cdot (\beta x - \alpha X) \]
and the second factor is isomorphic to $H$ as a left $H$-module.
\end{proof}
Let $k$ be the trivial $D(\Lambda_2)$-module, and $k_-$ the the $1$-dimensional $D(\Lambda_2)$-module on which $g$ and $G$ act as $-1$.  

\begin{lem}  \label{induced_mods} $k_{H_{\alpha \beta}}\!\! \uparrow ^{D(\Lambda_2)}$ is an indecomposable $2$-dimensional module with top $k$ and socle $k_-$, and any such $2$-dimensional module is induced from the trivial module of some $H_{\alpha \beta}$.  For any $\alpha, \beta, \gamma, \delta$ with $(\alpha, \beta) \neq (0,0) \neq (\gamma, \delta)$ there is a non-split exact sequence
\[0 \rightarrow k \rightarrow k_{H_{\alpha \beta}}\!\! \uparrow ^{D(\Lambda_2)} \otimes k_- \rightarrow k_{H_{\gamma \delta}}\!\! \uparrow ^{D(\Lambda_2)} \rightarrow k \rightarrow 0. \]
\end{lem}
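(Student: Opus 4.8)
The plan is to treat the three assertions in turn, building everything from the decomposition $D(\Lambda_2)|_{H_{\alpha\beta}} = H_{\alpha\beta} \oplus H_{\alpha\beta}\cdot(\beta x - \alpha X)$ of the previous lemma, which shows that $H_{\alpha\beta}$ has index $2$ and hence that $M_{\alpha\beta} := k_{H_{\alpha\beta}}\!\!\uparrow^{D(\Lambda_2)}$ is $2$-dimensional. To pin down its structure I would write $v_0 = 1 \otimes 1$ and use that $g, G$ are grouplike with $\epsilon(g) = \epsilon(G) = 1$ to get $gv_0 = Gv_0 = v_0$, so $v_0$ carries the trivial character; meanwhile $g(x \otimes 1) = gx \otimes 1 = -xg \otimes 1 = -(x \otimes 1)$ and similarly for $G$, so $x \otimes 1$ and $X \otimes 1$ lie in the $k_-$-isotypic part. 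Since $z = \alpha x + \beta X$ acts as $\epsilon(z) = 0$ on the trivial $H_{\alpha\beta}$-module, $\alpha(x \otimes 1) + \beta(X \otimes 1) = 0$, so these two vectors are proportional and span a line; as $\dim M_{\alpha\beta} = 2$ this line is exactly $\rad M_{\alpha\beta}$ and is nonzero. This yields a basis $v_0, v_1$ (with $v_1$ a nonzero multiple of $x \otimes 1$ or $X \otimes 1$), top $k$, socle $\langle v_1 \rangle \cong k_-$, and indecomposability is immediate from simplicity of the top.

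For the converse, given any indecomposable $N$ of dimension $2$ with top $k$ and socle $k_-$, I would use $\operatorname{char} k \neq 2$ to pick a common $g,G$-eigenbasis $v_0, v_1$ with $gv_0 = Gv_0 = v_0$ and $gv_1 = Gv_1 = -v_1$, and record the structure constants $xv_0 = av_1$, $Xv_0 = bv_1$ (with $xv_1 = Xv_1 = 0$ since $x, X \in \rad D(\Lambda_2)$ annihilate the socle). Indecomposability forces $(a,b) \neq (0,0)$. Choosing $(\alpha, \beta) = (b, -a)$ makes $z = bx - aX$ kill $v_0$, so $v_0$ spans a trivial $H_{\alpha\beta}$-submodule of $N|_{H_{\alpha\beta}}$; by the induction--restriction adjunction this produces $\phi \colon M_{\alpha\beta} \to N$ with $\phi(1 \otimes 1) = v_0$, and since $\phi(x \otimes 1) = av_1$ and $\phi(X \otimes 1) = bv_1$ are not both zero, $\phi$ is surjective, hence an isomorphism by dimension.

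For the four-term sequence I would first compute $M_{\alpha\beta} \otimes k_-$ from the coproduct: tensoring with $k_-$ multiplies both the $g,G$-action and the $x,X$-action by $-1$, so $M_{\alpha\beta} \otimes k_-$ is again $2$-dimensional but now has top $k_-$ and socle $k$. The outer maps are then forced: $\pi \colon M_{\gamma\delta} \to k$ is projection onto the top, and $k \hookrightarrow M_{\alpha\beta} \otimes k_-$ is inclusion of the socle. The middle map $\psi$ I would take to be the composite of the quotient $M_{\alpha\beta} \otimes k_- \twoheadrightarrow \operatorname{top}(M_{\alpha\beta} \otimes k_-) \cong k_-$ with the inclusion $\soc M_{\gamma\delta} \cong k_- \hookrightarrow M_{\gamma\delta}$; a direct check on the eigenbases shows $\psi$ is a module map with kernel $\soc(M_{\alpha\beta} \otimes k_-) = \operatorname{im}(k \hookrightarrow -)$ and image $\soc M_{\gamma\delta} = \ker \pi$, so the spliced sequence is exact.

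The remaining, and genuinely harder, point is non-splitness, i.e.\ that the Yoneda class $\zeta \in \ext^2_{D(\Lambda_2)}(k,k)$ of the splice is nonzero. By construction $\zeta$ is the Yoneda product of the class $\eta_2 \in \ext^1(k, k_-)$ of $0 \to k_- \to M_{\gamma\delta} \to k \to 0$ with the class $\eta_1 \in \ext^1(k_-, k)$ of $0 \to k \to M_{\alpha\beta} \otimes k_- \to k_- \to 0$; both are nonzero since $M_{\gamma\delta}$ and $M_{\alpha\beta} \otimes k_-$ are indecomposable. That the product of these particular nonzero classes is nonzero does not follow formally, and here I would invoke the structure of the cohomology of the non-semisimple block $D(\Lambda_2)f_+$: on this block $gG = 1$ forces $G = g$ and $xX + Xx = 0$, so the block is the smash product $\Lambda(x,X) \rtimes k\langle g \rangle$ with $\langle g \rangle$ of order $2$ invertible in $k$. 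Hence $\ext^*(k,k)$ is the ring of $\langle g \rangle$-invariants in $\ext^*_{\Lambda(x,X)}(k,k) \cong k[\xi_1, \xi_2]$, and the pairing $\ext^1(k, k_-) \otimes \ext^1(k_-, k) \to \ext^2(k,k)$ is identified with multiplication of linear forms in the domain $k[\xi_1, \xi_2]$. Under this identification $\eta_1$ and $\eta_2$ are nonzero linear forms determined up to scalar by $(\alpha, \beta)$ and $(\gamma, \delta)$, so their product is nonzero and $\zeta \neq 0$. I expect this last step --- relating the abstract Yoneda product back to the polynomial cohomology and checking nonvanishing --- to be the main obstacle, the rest being bookkeeping with the explicit bases.
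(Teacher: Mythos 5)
Your proof is correct, and on the first two claims it follows essentially the paper's own route: the paper likewise exhibits a basis $\{1\otimes 1,\, X\otimes 1\}$ of $k_{H_{\alpha \beta}}\!\!\uparrow^{D(\Lambda_2)}$ (for $\alpha\neq 0$) and, for the converse, chooses a basis of the given $2$-dimensional module on which $x$ and $X$ act by $\left(\begin{smallmatrix}0&-\beta\\0&0\end{smallmatrix}\right)$ and $\left(\begin{smallmatrix}0&\alpha\\0&0\end{smallmatrix}\right)$ and matches this with the induced module; your Frobenius-reciprocity map is that same comparison in adjoint form. The genuine divergence is the last claim, which the paper dismisses with ``the claim about exact sequences follows immediately,'' i.e.\ it records only that the splice of the two short exact sequences exists, each being non-split because its middle term is indecomposable. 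You read ``non-split'' in the stronger (and surely intended) sense that the Yoneda class in $\ext^2_{D(\Lambda_2)}(k,k)$ is non-zero, and you prove it by identifying the principal block with $\Lambda(x,X)\rtimes k\langle g\rangle$ and the pairing $\ext^1_{D(\Lambda_2)}(k_-,k)\otimes\ext^1_{D(\Lambda_2)}(k,k_-)\to\ext^2_{D(\Lambda_2)}(k,k)$ with multiplication of linear forms in $k[\xi_1,\xi_2]$. That argument is sound --- it is exactly the description of the cohomology that the paper itself establishes later, in Remark \ref{ext} and Section \ref{n>2section} --- so on this point your proof supplies a justification the paper leaves implicit. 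If you prefer an argument not importing the block cohomology, note that the projective cover of $E:=k_{H_{\gamma \delta}}\!\!\uparrow^{D(\Lambda_2)}$ is $P_+$, whose socle is $k$, so $\Omega(E)$ is $2$-dimensional with top $k_-$; hence $\Hom_{D(\Lambda_2)}(\Omega(E),k)=0$ and so $\ext^1_{D(\Lambda_2)}(E,k)=0$. Applying $\Hom_{D(\Lambda_2)}(-,k)$ to $0\to k_-\to E\to k\to 0$, the long exact sequence then shows that the connecting map (Yoneda product with the class of this extension) is injective on $\ext^1_{D(\Lambda_2)}(k_-,k)$, so the splice of two non-zero classes cannot vanish; this uses only the Loewy structure of $P_+$ computed in the proof of Lemma \ref{induced_type_lem}.
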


\begin{proof}
Let $H = H_{\alpha \beta}$ and let $\alpha \neq 0$.  It is easy to verify that $\alpha \otimes 1$ and $ X \otimes 1$ form a basis for $k_H\!\! \uparrow ^{D(\Lambda_2)} = D(\Lambda_2) \otimes _H k_H$, and that $X \otimes 1$ generates a submodule isomorphic to $k_-$.

Given any $2$-dimensional indecomposable module with top $k$ and socle $k_-$, we may choose a basis such that the elements $x$ and $X$ must act via matrices of the form
\[ \left(\begin{array}{ll} 0 & -\beta \\ 0 & 0\end{array} \right) \,\,\, \mathrm{and} \,\,\,\left(\begin{array}{ll} 0 & \alpha \\ 0 & 0\end{array} \right). \]
If $\alpha \neq 0$ the action of $D(\Lambda_2)$ is the same as on the basis for $k_{H_{\alpha \beta}} \!\! \uparrow ^{D(\Lambda_2)}$ given above, so these modules are isomorphic. Similar arguments hold for $\beta \neq 0$.
The claim about exact sequences follows immediately.
\end{proof}

To apply Theorem \ref{detecting_thm} to the family $\mathcal{A} = \{ H_{\alpha \beta} : (\alpha, \beta) \neq (0,0) \}$ we need to verify the condition on $\ext ^2 _{D(\Lambda_2)}(k,k)$.  We will prove a stronger result that allows us to deduce some finite generation properties for certain modules over $\ext ^* _{D(\Lambda_2)}(k,k)$.

\begin{lem} \label{induced_type_lem} Every element of $\ext_{D(\Lambda_2)}^n(k,k)$, $\ext_{D(\Lambda_2)} ^n (k,k_-)$, $\ext_{D(\Lambda_2)}^n(k_-,k)$ and $\ext_{D(\Lambda_2)} ^n (k_-, k_-)$ is of induced type with all algebras involved of the form $H_{\alpha \beta}$. \end{lem}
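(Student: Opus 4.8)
The plan is to reduce everything to the principal block, represent the degree-one classes directly by the induced modules of Lemma~\ref{induced_mods}, and then obtain all higher classes as Yoneda products by factoring binary forms over the algebraically closed field $k$. First I would observe that $k$ and $k_-$ both lie in $D(\Lambda_2)f_+$, since $gG$ acts as $1$ on each, so it suffices to work in $A := D(\Lambda_2)f_+$. Setting $G=g$ in the relations identifies $A \cong E \rtimes k\langle g\rangle$, where $E=k\langle x,X\rangle/(x^2,X^2,xX+Xx)$ is the exterior algebra on $x,X$ and $g$ acts by $-1$ on each of $x,X$; here $\mathrm{char}\,k\neq 2$ makes $k\langle g\rangle$ semisimple. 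Restricting $H_{\alpha\beta}$ to this block gives $\langle g,\,\alpha x+\beta X\rangle\cong k[y]/(y^2)\rtimes k\langle g\rangle$.

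Next I would compute the cohomology. Because $|\langle g\rangle|$ is invertible, the standard skew--group--algebra computation gives $\ext^*_A(k,k)\cong \ext^*_E(k,k)^{\langle g\rangle}$ and, assembling all four groups, identifies $\bigoplus_{S,T\in\{k,k_-\}}\ext^*_A(S,T)$ with $k[u,v]\rtimes k\langle g\rangle$, where $\ext^*_E(k,k)=k[u,v]$ with $\deg u=\deg v=1$ and $g$ acts by $-1$ on $u,v$. Taking isotypic parts shows $\ext^n_A(k,k)$ and $\ext^n_A(k_-,k_-)$ are the degree-$n$ binary forms for $n$ even (and vanish for $n$ odd), while $\ext^n_A(k,k_-)$ and $\ext^n_A(k_-,k)$ are the degree-$n$ forms for $n$ odd. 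Crucially, the multiplication on this Ext algebra is ordinary multiplication of forms in $u,v$, the algebra is generated by its degree-one part, and $\ext^1_A(S,S)=0$, so every nonzero degree-one class forces a switch between $k$ and $k_-$.

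I would then realise the degree-one classes as induced type. The sequence $0\to k_-\to k_{H_{\alpha\beta}}\!\!\uparrow\to k\to 0$ from Lemma~\ref{induced_mods} is an induced-type representative of a class in $\ext^1_A(k,k_-)$ with $L_0=k$, and tensoring that module with $k_-$ yields $0\to k\to k_{H_{\alpha\beta}}\!\!\uparrow\otimes k_-\to k_-\to 0$, an induced-type class in $\ext^1_A(k_-,k)$ with $L_0=k_-$. As $(\alpha:\beta)$ ranges over $P^1$ these modules are pairwise non-isomorphic, so they sweep out every nonzero class of the two-dimensional spaces $\ext^1_A(k,k_-)$ and $\ext^1_A(k_-,k)$; thus every degree-one class is of induced type with algebra involved of the form $H_{\alpha\beta}$.

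Finally I would bootstrap to all $n$. A nonzero class $\xi\in\ext^n_A(S,T)$ corresponds to a nonzero binary form $F\in k[u,v]_n$, which, since $k$ is algebraically closed, factors as $F=\ell_1\cdots\ell_n$ into linear forms. Each $\ell_i$ is a degree-one class between the simples, and by the matched parities the factors compose as $S\to k_\mp\to k_\pm\to\cdots\to T$; splicing the induced-type representatives from the previous paragraph produces an exact sequence for $\xi$ whose middle terms all have the form $L_i\otimes(k_{H_{\alpha_i\beta_i}}\!\!\uparrow)$ with $L_i\in\{k,k_-\}$, i.e.\ exactly an induced-type representative, uniformly across the four cases $(S,T)$; the zero class is the only remaining, degenerate, case. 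I expect the main obstacle to be pinning down the multiplicative dictionary precisely enough --- verifying that Yoneda multiplication of the degree-one classes really is multiplication of binary forms (up to the harmless sign twists from $g$) and that the bijection $(\alpha:\beta)\leftrightarrow\ell_i$ between the $H_{\alpha\beta}$ and the linear forms is the correct one. Once that dictionary is fixed, algebraic-closedness does the rest of the work.
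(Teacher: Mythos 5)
Your proposal is correct in substance, but it takes a genuinely different route from the paper. The paper's proof is purely homological and term-by-term: from the Loewy structures of $P_\pm$ it writes down the minimal projective resolution (\ref{mpr}) of $k$, reads off the parity pattern of the four Ext groups, and then, given a nonzero cocycle $\phi:\Omega^n\to k_\pm$, performs an explicit pushout: it exhibits a submodule $I\subset P_\mp^{\oplus n}$ such that $P_\mp^{\oplus n}/I$ is two-dimensional with the correct top and socle, hence of the form $k_{H_{\alpha\beta}}\!\!\uparrow^{D(\Lambda_2)}\otimes k_\mp$ by Lemma \ref{induced_mods}, and it iterates this pushout along the resolution to assemble the induced-type sequence one step at a time. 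You instead pass to the principal block $D(\Lambda_2)f_+\cong E\rtimes k\langle g\rangle$, compute the whole Ext algebra of $k\oplus k_-$ as binary forms in $k[u,v]$ with the parity (checkerboard) grading, and then combine factorization of forms over the algebraically closed field with Yoneda splicing of the degree-one induced-type representatives supplied by Lemma \ref{induced_mods}. Your route needs a heavier, though standard, input that the paper's argument never uses: the multiplicative identification $\ext^*_{E\rtimes G}(-,-)\cong\ext^*_E(-,-)^G$ for $|G|$ invertible (the paper states only the diagonal case, in Remark \ref{ext}, and for a different purpose) together with $\ext^*_E(k,k)\cong k[u,v]$ for the exterior algebra. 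In exchange, you get generation of the Ext groups in degree one --- essentially the paper's Corollary \ref{finitely generated}, which the paper instead deduces \emph{from} this lemma --- built into the argument, and you make explicit where algebraic closedness enters; it is worth noting that the paper's pushout argument does not actually require algebraic closedness at this point, whereas your factorization of binary forms does.

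Two points should be tightened. First, your degree-one step ``pairwise non-isomorphic, so they sweep out every nonzero class'' is logically loose: injectivity of a set map $P^1\to P^1$ does not by itself give surjectivity. The clean argument avoids this entirely: a nonzero class in $\ext^1_{D(\Lambda_2)}(k,k_-)$ is a non-split extension, so its middle term is a two-dimensional indecomposable with top $k$ and socle $k_-$, and Lemma \ref{induced_mods} says precisely that such a module is $k_{H_{\alpha\beta}}\!\!\uparrow^{D(\Lambda_2)}$; twisting by $k_-$ handles $\ext^1_{D(\Lambda_2)}(k_-,k)$. Second, like the paper you treat only nonzero classes (for $n\geq 2$ the zero class can be realised as a splice whose product lands in a vanishing parity, so this is harmless and is in any case not a point of difference between the two proofs). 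With those repairs, your argument is a valid and somewhat more structural alternative to the paper's.
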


\begin{proof}
Let $e_\pm =(1/4)( 1 \pm g \pm G + gG)$, so that $D(\Lambda_2)e_+$ and $D(\Lambda_2)e_-$  are the projective covers $P_+$, $P_-$ of $k$ and $k_-$. These modules are spanned by $e_\pm, xe_\pm, Xe_\pm, xXe_\pm$ and have the Loewy structures
\[ \begin{tabular}{ll}
    \multicolumn{2}{c}{$k$} \\
$k_-$ & $k_-$ \\
\multicolumn{2}{c}{$k$}
   \end{tabular}
\,\,\,\mathrm{and}\,\,\,
\begin{tabular}{ll}
    \multicolumn{2}{c}{$k_-$} \\
$k$ & $k$ \\
\multicolumn{2}{c}{$k_-$}
   \end{tabular}\]
respectively.  It follows that there is a minimal projective resolution of $k$ given by
\begin{equation} \label{mpr} \ldots \rightarrow P_+^{\oplus 2i+1} \stackrel{\partial_{2i}}{\rightarrow} P_{-}^{ \oplus 2i} \stackrel{\partial_{2i-1}}{\rightarrow} P_+^{\oplus 2i-1} \rightarrow \ldots \rightarrow  P_+ \stackrel{\partial_0}{\rightarrow} k \rightarrow 0, \end{equation}
and a minimal projective resolution for $k_-$ can be obtained by
tensoring this sequence with $k_-$, that is, swapping $P_+$ and
$P_-$.  We see that the groups $\ext_{D(\Lambda_2)} ^n (k_-,k)$
and $\ext_{D(\Lambda_2)} ^n (k,k_-)$ are non-zero if and only if
$n$ is odd, and the groups $\ext_{D(\Lambda_2)} ^n (k,k)$ and
$\ext_{D(\Lambda_2)} ^n (k_-,k_-)$ are non-zero if and only if $n$
is even.  A non-zero element of one of these groups is represented
by a non-zero homomorphism $\phi$ from the $n$th kernel $\Omega^n$
of one of these resolutions to $k$ or $k_-$.  To find a sequence
representing this element we for the push out diagram:
\[ \xymatrix{
\Omega ^n \ar@^{(->}[r] \ar[d]^\phi & P_\mp ^{\oplus n} \ar[d]^q \\
k_\pm  \ar[r]                          & P_\mp^{\oplus n} / \ker \phi
}
\]
where the horizontal map on the bottom row sends $1$ to the image of an element $w$ of $\Omega ^n$ such that $\phi(w)=1$, and $q$ is the quotient map.  Let $w = (w_1, \ldots, w_n)$ where $w_i$ lies in the $i$th summand of $P_\mp ^{\oplus n}$ and without loss of generality $w_1 \notin \ker \phi$.  Let $w'$ span a complement to the image of $w_1$ in the radical of the first summand of $P_\mp ^{\oplus n}$ modulo its socle. Then if $I$ is the submodule of $P_\mp ^{\oplus n}$ generated by $w'$, $\ker \phi$ and $(0,P_\mp, \ldots, P_\mp)$ we have that $P_\mp ^{\oplus n}/I$ is $2$-dimensional with socle $k_\pm$ and top $k_\mp$.  By Lemma \ref{induced_mods} this is a module of the form $k_{H_{\alpha \beta}} \!\! \uparrow ^{D(\Lambda_2)}$, and hence we obtain a commutative square
\[ \xymatrix{
\Omega ^n \ar@^{(->}[r] \ar[d]^\phi & P_\mp ^{\oplus n} \ar[d] \\
k_\pm  \ar[r]                          & P_\mp^{\oplus n} / I \cong k_{H_{\alpha \beta}} \!\! \uparrow ^{D(\Lambda_2)} \otimes k_\mp
}
\]
where the horizontal map on the bottom row is non-zero.

To finish the proof the lemma it is enough to show that any diagram

\[ \xymatrix{
P_\mp ^{\oplus n} \ar[r]^{\partial_{n-1}} \ar[d]^\gamma & P_\pm ^{\oplus n-1}  \\
k_{H_{\alpha \beta}} \!\! \uparrow ^{D(\Lambda_2)} \otimes k_\mp  &
}
\]
with first vertical map non-zero can be completed to a commutative square with the bottom right module of the form $k_{H_{\alpha'  \beta'}} \!\! \uparrow ^{D(\Lambda_2)} \otimes k_\pm$ and the second vertical map non-zero.  This follows by a pushout argument very similar to the above.
\end{proof}

\begin{cor}\label{finitely generated}
The ring $\ext_{D(\Lambda_2)} ^* (k,k_-)$ is generated as a left $\ext_{D(\Lambda_2)} ^* (k_-,k_-)$-module and as a right $\ext_{D(\Lambda_2)}^*(k,k)$-module by $\ext_{D(\Lambda_2)} ^1 (k,k_-)$.  A similar result holds for $\ext_{D(\Lambda_2)} ^* (k_-,k)$.
\end{cor}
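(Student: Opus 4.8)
The plan is to use Lemma \ref{induced_type_lem} to factor an arbitrary extension class into a Yoneda product of degree-one classes between the two simple modules, and then to read off each generation statement by peeling off an extreme factor of that product.

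First I would fix $n$ odd (the only degrees in which $\ext^n_{D(\Lambda_2)}(k,k_-)$ is non-zero) and take an arbitrary class $\xi \in \ext^n_{D(\Lambda_2)}(k,k_-)$. By Lemma \ref{induced_type_lem}, and more precisely by the explicit construction in its proof, $\xi$ is represented by an induced-type sequence
\[ 0 \to k \to I_{n-1} \to I_{n-2} \to \cdots \to I_0 \to k_- \to 0 \]
in which every middle term $I_j \cong k_{H_{\alpha_j\beta_j}}\!\!\uparrow^{D(\Lambda_2)} \otimes k_\pm$ is two-dimensional, with simple top and simple socle. The key elementary observation is that such a sequence splits into short exact pieces with \emph{simple} connecting terms: since the first map is injective and the last is surjective and each $\dim I_j = 2$, an inductive rank count forces every differential to have rank one, so each image $C_j = \operatorname{im}(I_j \to I_{j-1})$ is one-dimensional, and the composition factors alternate $k, k_-, k, \ldots$ along the sequence, with $C_0 = k_-$ and $C_n = k$. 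Thus $\xi$ is the Yoneda product of the $n$ short exact sequences $S_j : 0 \to C_{j+1} \to I_j \to C_j \to 0$, each representing a class in $\ext^1$ between two simple modules.

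Given this factorisation the corollary follows by peeling off one extreme factor. The leftmost piece is $S_{n-1} : 0 \to k \to I_{n-1} \to k_- \to 0 \in \ext^1_{D(\Lambda_2)}(k,k_-)$, and the remaining product $S_{n-2} * \cdots * S_0$ runs from $k_-$ to $k_-$, so lies in $\ext^{n-1}_{D(\Lambda_2)}(k_-,k_-)$; this writes $\xi = b \cdot S_{n-1}$ and, as $\xi$ was arbitrary, shows that $\ext^1_{D(\Lambda_2)}(k,k_-)$ generates $\ext^*_{D(\Lambda_2)}(k,k_-)$ as a left $\ext^*_{D(\Lambda_2)}(k_-,k_-)$-module. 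Symmetrically, the rightmost piece is $S_0 : 0 \to k \to I_0 \to k_- \to 0 \in \ext^1_{D(\Lambda_2)}(k,k_-)$ while $S_{n-1} * \cdots * S_1$ runs from $k$ to $k$, so lies in $\ext^{n-1}_{D(\Lambda_2)}(k,k)$; this expresses $\xi = S_0 \cdot a$ and gives the right $\ext^*_{D(\Lambda_2)}(k,k)$-module statement. The assertions for $\ext^*_{D(\Lambda_2)}(k_-,k)$ are obtained verbatim, applying the part of Lemma \ref{induced_type_lem} that treats $\ext^n_{D(\Lambda_2)}(k_-,k)$.

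The step I expect to be the main obstacle is justifying that the representing induced-type sequence can be taken with all middle terms two-dimensional, equivalently that all connecting modules are simple: this is precisely what makes each spliced piece a genuine degree-one class, and it is not visible from the bare statement of Lemma \ref{induced_type_lem}, so it must be extracted from the pushout construction in that proof (the one producing the modules $k_{H_{\alpha\beta}}\!\!\uparrow^{D(\Lambda_2)} \otimes k_\pm$). A secondary, purely bookkeeping, point is to keep the simple labels and the direction convention for the sequences straight, so that the peeled-off degree-one factor really lands in $\ext^1(k,k_-)$ and the left/right module sides match those claimed in the statement.
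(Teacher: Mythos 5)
Your proposal is correct and follows essentially the same route as the paper: the paper's proof likewise invokes the explicit form of the representing sequence from Lemma \ref{induced_type_lem} (all middle terms of the form $k_{H_{\alpha\beta}}\!\!\uparrow^{D(\Lambda_2)}\otimes k_\pm$, hence two-dimensional with alternating simple sub and quotient) and observes that cutting off the extreme short exact piece exhibits the class as a Yoneda product of an element of $\ext^{n-1}_{D(\Lambda_2)}(k_-,k_-)$ (resp.\ $\ext^{n-1}_{D(\Lambda_2)}(k,k)$) with an element of $\ext^1_{D(\Lambda_2)}(k,k_-)$. You correctly identify that the two-dimensionality of the middle terms is the substantive point and that it comes from the pushout construction in the proof of Lemma \ref{induced_type_lem} rather than from its bare statement, which is exactly what the paper's proof implicitly uses.
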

\begin{proof}
 The module action is via Yoneda product.  Any non-zero element of $\ext_{D(\Lambda_2)} ^n (k,k_-)$ is represented by a sequence of the form
\[ 0 \rightarrow k_-  \rightarrow k_{H_1}\!\! \uparrow ^{D(\Lambda_2)}  \rightarrow k_{H_2}\!\! \uparrow ^{D(\Lambda_2)} \otimes k_- \rightarrow \ldots \rightarrow  k_{H_n}\!\! \uparrow ^{D(\Lambda_2)} \rightarrow k \rightarrow 0 \]
where $H_i = H_{\alpha _i \beta _i}$ for some $\alpha_i, \beta_i \in k$, which is visibly the Yoneda splice of an element of $\ext^{n-1}_{D(\Lambda_2)} (k_-, k_-)$ followed by an element of $\ext_{D(\Lambda_2)} ^1 (k,k_-)$.  The other parts of the lemma follow similarly.
\end{proof}

We are now in a position to define rank varieties for $D(\Lambda_2)$.

\begin{defn}
 Let $M$ be a $D(\Lambda_2)$-module.  Then the \textbf{rank variety} $V^r(M)$ is defined to be
\[ \{ 0 \} \cup \{ (\alpha, \beta) \in k^2 : M |_{H_{\alpha \beta}} \mathrm{is\,\, not\,\, projective} \}. \]
\end{defn}
Note that this is a union of lines through the origin in $k^2$, so there is a corresponding projective variety $\bar{V}^r(M)$.  That this variety satisfies Dade's Lemma is immediate from Theorem \ref{detecting_thm}.
In fact, by Lemma \ref{detect} the rank variety $V^r$ satisfies $(C1)-(C4)$. 

Next we show that the rank variety is well-behaved under tensor products.

\begin{lem} \label{tensor}
Let $M$ and $N$ be finite-dimensional $D(\Lambda_2)$-modules. Then 
\[V^r(M\otimes N)=V^r(M) \cap V^r(N).\]
\end{lem}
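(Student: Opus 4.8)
The plan is to prove the two inclusions separately, using the tensor product property of subalgebra restriction as the backbone. The key observation is that for any subalgebra $H_{\alpha\beta}$, the restriction functor commutes with tensor products in a controlled way: we have $(M \otimes N)|_{H_{\alpha\beta}} \cong M|_{H_{\alpha\beta}} \otimes N|_{H_{\alpha\beta}}$ as $H_{\alpha\beta}$-modules, since $H_{\alpha\beta} = \langle g, G, \alpha x + \beta X\rangle$ is a sub-Hopf algebra (the grouplikes $g,G$ and the skew-primitive $\alpha x + \beta X$ generate a coalgebra-closed subalgebra). So the question reduces to: \emph{when is $M|_{H_{\alpha\beta}} \otimes N|_{H_{\alpha\beta}}$ projective?}

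**The inclusion** $V^r(M \otimes N) \subseteq V^r(M) \cap V^r(N)$ is the easy direction. Suppose $(\alpha,\beta) \notin V^r(M)$, so $M|_{H_{\alpha\beta}}$ is projective, hence free (as $H_{\alpha\beta}$ is self-injective and local-ish enough that projective means free). Tensoring a free module with any module over a Hopf algebra yields a projective module, because $H_{\alpha\beta} \otimes N|_{H_{\alpha\beta}} \cong H_{\alpha\beta}^{\dim N}$ as $H_{\alpha\beta}$-modules via the standard Hopf-algebra isomorphism (free module tensor anything is free, using the coproduct and antipode). Thus $(M \otimes N)|_{H_{\alpha\beta}}$ is projective, so $(\alpha,\beta) \notin V^r(M\otimes N)$. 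By symmetry the same holds if $(\alpha,\beta) \notin V^r(N)$, giving the first inclusion.

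**The reverse inclusion** $V^r(M) \cap V^r(N) \subseteq V^r(M\otimes N)$ is where I expect the main obstacle. Here I must show: if both $M|_{H_{\alpha\beta}}$ and $N|_{H_{\alpha\beta}}$ are \emph{non}-projective, then so is their tensor product. This is the genuinely hard part and is exactly the ``tensor product property'' that fails for arbitrary algebras — it requires special structure of $H_{\alpha\beta}$. My plan is to pin down the structure of $H_{\alpha\beta}$ explicitly: up to the grouplike action, it behaves like a rank-one algebra generated by a single nilpotent element $\alpha x + \beta X$ (with $(\alpha x + \beta X)^2 = 0$), twisted by the group $\langle g, G\rangle$. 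I would argue that $H_{\alpha\beta}$ is isomorphic to a tensor product or smash product of $k\langle g,G\rangle$ (which is semisimple, $\mathrm{char}\, k \neq 2$) with the truncated polynomial algebra $k[y]/(y^2)$ on $y = \alpha x + \beta X$. The semisimple factor does not affect projectivity, so projectivity over $H_{\alpha\beta}$ is controlled by projectivity over $k[y]/(y^2)$. Over $k[y]/(y^2)$ the tensor product property for rank varieties is classical (this is exactly Carlson's original setting, or can be read off from the results of \cite{BEH} for quantum complete intersections): a module over $k[y]/(y^2)$ is projective iff $y$ acts freely, and the tensor product of two non-free modules is non-free.

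**To make the reduction rigorous** I would verify that restriction to $H_{\alpha\beta}$ followed by further restriction to the subalgebra $k[y]/(y^2)$ commutes with tensor products, and that a module is $H_{\alpha\beta}$-projective if and only if it is $k[y]/(y^2)$-projective — the latter because $H_{\alpha\beta}$ is free over $k[y]/(y^2)$ with the complementary factor carrying the semisimple group action. Granting the tensor product property over $k[y]/(y^2)$, the non-projectivity of $M$ and $N$ on restriction to $k[y]/(y^2)$ forces non-projectivity of $M \otimes N$ there, hence over $H_{\alpha\beta}$, so $(\alpha,\beta) \in V^r(M \otimes N)$. Combining both inclusions yields the desired equality, and finally I note that the origin is in both sides trivially by the convention in the definition of $V^r$.
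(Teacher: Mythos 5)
Your proposal rests on a structural claim that is false: $H_{\alpha\beta}$ is \emph{not} a sub-Hopf algebra (not even a subcoalgebra) of $D(\Lambda_2)$ when $\alpha$ and $\beta$ are both nonzero. Indeed
\[ \Delta(\alpha x+\beta X)=1\otimes(\alpha x+\beta X)+\alpha x\otimes g+\beta X\otimes G, \]
and since $g$ and $G$ are linearly independent grouplikes, applying $\id\otimes\phi$ for a functional $\phi$ separating $g$ from $G$ shows that any such expression forces $x$ and $X$ \emph{individually} to lie in $H_{\alpha\beta}$, which they do not. So $\alpha x+\beta X$ is not skew-primitive, and your key step --- the isomorphism $(M\otimes N)|_{H_{\alpha\beta}}\cong M|_{H_{\alpha\beta}}\otimes N|_{H_{\alpha\beta}}$ ``since $H_{\alpha\beta}$ is coalgebra-closed'' --- is unjustified; without a coproduct landing in $H_{\alpha\beta}\otimes H_{\alpha\beta}$, the right-hand side is not even a well-defined $H_{\alpha\beta}$-module. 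Your fallback reduction has the same problem one level down: in characteristic $\neq 2$ the algebra $k[y]/(y^2)$ admits no Hopf structure with $y$ primitive (because $(1\otimes y+y\otimes 1)^2=2\,y\otimes y\neq 0$), so ``the tensor product of two non-free $k[y]/(y^2)$-modules is non-free'' is not a well-posed statement here; Carlson's classical setting is characteristic $2$, exactly where this obstruction vanishes. Concretely, the element $t=(\alpha x+\beta X)f_+$ acts on a tensor product as $t\otimes s+1\otimes t$, involving the group element $s$, so projectivity of the tensor product cannot be tested over the subalgebra generated by $t$ alone in the way you propose.

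The paper's proof shows how to repair exactly this point, and the repair is the real content of the lemma. One first reduces to indecomposable modules and splits off the semisimple block: if $f_-M\neq 0$ then $M$ is simple projective and everything follows, so one may assume $M=f_+M$ and $N=f_+N$. The crucial observation is that \emph{on the block} $f_+$ the two grouplikes collapse: $gf_+=Gf_+$, so the problematic terms combine, $\alpha xf_+\otimes gf_+ +\beta Xf_+\otimes Gf_+=t\otimes s$, and $\Delta(t)$ acts on tensor products of $f_+$-modules as $t\otimes s+1\otimes t$. Thus $H_{\alpha\beta}f_+\cong k\langle s\rangle\ltimes k[t]/(t^2)$ becomes a genuine four-dimensional Hopf algebra (Sweedler's algebra), one gets $(M\otimes N)|_{H_{\alpha\beta}f_+}\cong M|_{H_{\alpha\beta}f_+}\otimes N|_{H_{\alpha\beta}f_+}$ honestly, and the tensor product property follows because every non-projective summand of a module over this algebra is one-dimensional, so a tensor product of non-projectives contains a one-dimensional (hence non-projective) summand. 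Your easy direction can be salvaged by other means, but the hard direction genuinely requires this block-by-block analysis, which your argument skips entirely.
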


\begin{proof} The result is equivalent to showing that $(M\otimes N)|_{H_{\alpha \beta}}$ is projective if and only if $M|_{H_{\alpha \beta}}$ or $N|_{H_{\alpha \beta}}$ is projective.  Without loss of generality we choose $M$ and $N$ to be indecomposable. If $f_-M \neq 0$ then $M$ is simple projective by Lemma \ref{semisimple_block}, so $M\otimes N$ is projective and the lemma follows. So we may assume $M= f_+M$ and $N=f_+N$.  

It is therefore enough to prove that $(M\otimes N)|_{H_{\alpha \beta}f_+}$ is projective if and only if $M|_{H_{\alpha \beta}f_+}$ or $N|_{H_{\alpha \beta}f_+}$ is projective.  The subalgebra $H:=H_{\alpha\beta}f_+$ is isomorphic to $k\langle s \rangle \ltimes k[t]/ t^2 $, where $t$ corresponds to $t=(\alpha x+ \beta X)f_+$ and $s$ to $ gf_+=Gf_+$, so $sts=-t$.

The coproduct on $D(\Lambda_2)$ acts as follows:
\begin{align} \label{coprod_t}
\Delta( t) & =(\alpha x  \otimes g +\beta X \otimes G + 1\otimes (\alpha x+ \beta X))(f_+\otimes f_+ + f_-\otimes f_-)\\ \nonumber
\Delta( s) & =(g \otimes g )(f_+\otimes f_+ + f_-\otimes f_-).
\end{align}
Therefore $t$ acts on $M\otimes N$ as $t\otimes s +1\otimes t$, $s$ acts as $s\otimes s$ and $f_+$ as the identity. Then $H$ is a Hopf algebra with comultiplication $\Delta(t)=t\otimes s+1\otimes t$, $\Delta(s)=s \otimes s$, counit $\epsilon(s)=1$, $\epsilon(t)=0$ and antipode $S(s)=s,$ $S(t)=-ts$. With this action we have $(M\otimes N)_H \cong M|_H \otimes N|_H$. Therefore $(M\otimes N)_H$ is projective if and only if $M|_H$ is projective or $N|_H$ is projective. The statement then follows immediately by the definition of the rank variety. 
\end{proof}

\begin{subsection}{The restriction map $\res^{ D(\Lambda_2)}_{ H_{\alpha \beta}}$} \label{restriction}
We will need later to understand the map $\res^{ D(\Lambda_2)}_{ H_{\alpha \beta}} : \ext_{D(\Lambda_2)} ^*(k,k) \rightarrow \ext ^* _{H_{\alpha \beta}} (k,k)$ induced by the inclusion $H_{\alpha \beta} \hookrightarrow D(\Lambda_2)$.  Let $\mathsf{P}$ be the minimal $D(\Lambda_2)$-projective resolution (\ref{mpr}) of $k$
and let $\mathsf{Q}$ be a minimal $H_{\alpha \beta}$-projective
resolution of $k$. In order to compute the restriction map we compute a chain map lifting the identity $k \rightarrow k$ to a map $\mathsf{Q} \rightarrow \mathsf{P}|_{H_{\alpha \beta}}$. From the proof of Lemma \ref{induced_type_lem} we see that
\begin{equation} \label{Qpm}
P_+ |_{H_{\alpha \beta}} = \langle e_+, (\alpha x + \beta X)e_+ \rangle \oplus \langle (\alpha X - \beta x) e_+, xXe_+ \rangle 
\end{equation}
where the first summand is isomorphic to the $H_{\alpha \beta}$-projective cover $Q_+$ of
$k$ and the second to the $H_{\alpha \beta}$-projective cover
$Q_-$ of $k_-$.  A similar result holds for the restriction of
$P_-$.  It follows that a minimal $H_{\alpha \beta}$-projective
resolution of $k$ can be obtained by taking repeated Yoneda
splices of
\[ 0 \rightarrow k \rightarrow Q_- \rightarrow Q_+ \rightarrow k \rightarrow 0 \]
and so $\ext ^* _{H_{\alpha \beta}} (k,k) \cong k[\gamma] $ where $\gamma$ is the element of $\ext ^2 _{H_{\alpha \beta}} (k,k)$ represented by the sequence above.

We now give the lift between $\mathsf{Q} $ and $\mathsf{P}|_{H_{\alpha \beta}}$:

\[ \xymatrix{
P_+ \oplus P_+ \oplus P_+ \ar[r] & P_- \oplus P_- \ar[r] & P_+ \ar[r] & k \ar[r] & 0 \\
Q_+ \ar[r] \ar[u] & Q_- \ar[r] \ar[u] & Q_+ \ar[r] \ar[u] & k \ar[r] \ar@{=}[u] & 0
 }
\]
The horizontal maps on the top row are determined by 
\[\begin{array}{ll}
(ae_+, be_+ ce_+)  \mapsto ((ax+ bX)e_-, (bx + c X)e_-) & \\ 
(ae_-, be_-)       \mapsto (ax + bX)e_+ &
e_+                \mapsto 1
\end{array}\]
reading from left to right. Identifying $Q_+$ with the first summand in (\ref{Qpm}) and $Q_-$
with the corresponding summand of $P_- | _{H_{\alpha \beta}}$ the
horizontal maps on the bottom row are $e_+ \mapsto (\alpha x +
\beta X) e_-$, $e_- \mapsto (\alpha x + \beta X) e_+$ and $e_+
\mapsto 1$.  The vertical maps are
$e_+ \mapsto (\alpha^2 e_+,  \alpha \beta e_+, \beta ^2 e_+)$, $e_- \mapsto (\alpha e_-, \beta e_-)$ and $e_+ \mapsto e_+$.

The ring structure of $\ext^* _{D(\Lambda_2)} (k,k)$ was computed by Taillefer in \cite{taillefer}: 
\[ \ext ^*_{D(\Lambda_2)} (k,k) = k[x,y,z] / (xy-z^2) \]   where $x,y,z$ all have degree 2.  These correspond to homomorphisms $u_x,u_y,u_z$ from $P_+ \oplus P_+ \oplus P_+$ to $k$:
\[\begin{array}{lll}
u_x  (a,b,c) = a\cdot 1 &
u_y (a,b,c) = c \cdot 1 &
u_z(a,b,c) = b \cdot 1 
\end{array}\]
Our computations show that
\[\begin{array}{lll}
\res^{ D(\Lambda_2)}_{ H_{\alpha \beta}} x = \alpha^2 \gamma & 
\res^{ D(\Lambda_2)}_{ H_{\alpha \beta}} y = \beta ^2 \gamma &
\res^{ D(\Lambda_2)}_{ H_{\alpha \beta}} z = \alpha \beta \gamma. 
\end{array}\]

The map $k^2 \rightarrow \operatorname{\mathrm{maxspec}} \ext^* _{D(\Lambda_2)} (k,k) = \{ (a,b,c) \in k^3 : ab=c^2 \}$ induced by restriction is therefore a bijection, but not invertible as a map of varieties (the inverse will involve a square root).  This is the same as the situation for elementary abelian groups: see \cite[Remark after 5.8.2]{bensonII}.

\begin{rem}\label{ext}
Given a $k$-algebra $A$, a group $G$ such that $kG$ is semisimple, and a map $G \rightarrow \aut A$, form the split extension $B = A \rtimes kG$.  For any $B$-module $M$ there is a ring isomorphism
\[ \ext ^* _B(M,M) \cong \ext^*_A (M|_A, M|_A) ^G \]
where the right hand side is the ring of $G$-invariants.  This can be used to find the ring structure of $\ext^* _{D(\Lambda_2)} (k,k)$ as follows: as shown in Section \ref{n>2section}, the principal block of $D(\Lambda_2)$ is Morita equivalent to the split extension of $A = k[x_1, x_2]/(x_1^2, x_2^2)$ by a cyclic group $\langle g \rangle$ of order 2 that conjugates each generator to its additive inverse.  $\ext^*_A(k,k)$ is isomorphic to $k[\eta_1, \eta_2]$ with the $\eta_i$s in degree $1$, and the action of $g$ multiplies $\eta_i$ by $-1$.  Therefore the ring of invariants is generated by $\eta_1 ^2$, $\eta_2^2$ and $\eta_1 \eta_2$ and is isomorphic to $k[x,y,z]/(xy-z^2)$.
\end{rem}

\end{subsection}

\begin{subsection}{Connecting the rank and support varieties}
An important property of the rank variety is that it can be identified with the support variety.  We will link the definition of rank variety to the existing definition of
the support variety on $D(\Lambda_2)$.

Note that by the results of the previous section, $\ext^*_{H_{\alpha \beta}}(k,k) \cong k[\gamma]$ where $\gamma$ is in degree $2$. We write $\tau_{\alpha \beta}$ for the restriction map $\res^{ D(\Lambda_2)}_{ H_{\alpha \beta}}$ which is surjective by \ref{restriction}. The kernel of $\tau_{\alpha \beta}$ is therefore in $\operatorname{Proj}\ext^{ev}_{D(\Lambda_2)}(k,k)$, as any subring of $k[\gamma]$ does not have zero divisors, and it depends only on the line $\overline{(\alpha,\beta)}$ through the origin and $(\alpha, \beta)$.

\begin{lem}
Let $\zeta \in \ext^*_{D(\Lambda_2)}(k,k)$ and $M$ be an indecomposable $D(\Lambda_2) $-module. Then $(\zeta \otimes M)|_{H_{\alpha \beta}}=0$ if and only if $M|_{H_{\alpha\beta}}$ is projective or $\tau_{\alpha \beta} (\zeta)=0$. 
\end{lem}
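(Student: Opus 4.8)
The plan is to reduce the claim to a computation over the four-dimensional algebra $\bar H := H_{\alpha\beta}f_+$, on which the relevant Hopf structure was already produced in the proof of Lemma~\ref{tensor}. Throughout, $\zeta\otimes M$ denotes $\psi_M(\zeta)\in\ext^*_{D(\Lambda_2)}(M,M)$ and $(\zeta\otimes M)|_{H_{\alpha\beta}}$ its image under restriction; recall $\tau_{\alpha\beta}=\res^{D(\Lambda_2)}_{H_{\alpha\beta}}$. Since the two sides respect the even grading and $k[\gamma]$ is a domain, a standard grading argument reduces us to $\zeta$ homogeneous of positive degree.

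First I would dispose of the semisimple block. As $M$ is indecomposable, either $M=f_+M$ or $f_-M\neq 0$; in the latter case Lemma~\ref{semisimple_block} forces $M$ to be simple projective, so $M|_{H_{\alpha\beta}}$ is projective by flatness of $H_{\alpha\beta}$ and both sides of the asserted equivalence hold (the left because $\ext^{>0}_{D(\Lambda_2)}(M,M)=0$). Thus I may assume $M=f_+M$, whence $M|_{H_{\alpha\beta}}$ is an $\bar H$-module and restriction to $H_{\alpha\beta}$ agrees with restriction to $\bar H$.

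The crux is the identity $(\zeta\otimes M)|_{\bar H}=\tau_{\alpha\beta}(\zeta)\otimes(M|_{\bar H})$, the right-hand tensor action being taken over the Hopf algebra $\bar H$. I would represent $\zeta$ by an extension $E$ assembled from the projectives $P_\pm$, form $E\otimes M$ using the coproduct of $D(\Lambda_2)$, and restrict. Because $M=f_+M$ and $\Delta(f_+)=f_+\otimes f_++f_-\otimes f_-$, applying $f_+$ yields $(f_+E)\otimes M$, and \eqref{coprod_t} shows that $s,t$ act on this solely through the $\bar H$-coproduct $\Delta(t)=t\otimes s+1\otimes t$, $\Delta(s)=s\otimes s$. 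Hence $(E\otimes M)|_{\bar H}$ is $(E|_{\bar H})\otimes_k(M|_{\bar H})$ with the diagonal action, which represents $\psi^{\bar H}_{M|_{\bar H}}(\res_{\bar H}\zeta)$; and under the identification $\ext^*_{H_{\alpha\beta}}(k,k)=\ext^*_{\bar H}(k,k)$ of Section~\ref{restriction} one has $\res_{\bar H}\zeta=\tau_{\alpha\beta}(\zeta)$.

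Finally I would settle the reduced problem over $\bar H\cong k\langle s\rangle\ltimes k[t]/t^2$, a self-injective Nakayama algebra whose only indecomposables are $k$, $k_-$ and the two projectives $Q_\pm$, with $\ext^*_{\bar H}(k,k)\cong k[\gamma]$. For $0\neq\eta\in k[\gamma]$ I would verify $\psi^{\bar H}_N(\eta)\neq 0$ for each non-projective indecomposable $N$: this is trivial for $N=k$ since $\psi_k=\id$, while for $N=k_-$ the functor $-\otimes k_-$ is an autoequivalence (as $k_-\otimes k_-\cong k$), carrying the non-split period-two sequence for $k$ to that for $k_-$ and thus inducing a graded ring isomorphism $k[\gamma]\cong\ext^*_{\bar H}(k_-,k_-)$, so $\psi_{k_-}$ is injective. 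Decomposing $M|_{\bar H}$ into indecomposables makes $\psi^{\bar H}_{M|_{\bar H}}(\eta)$ block-diagonal, so it vanishes iff $\eta=0$ or $M|_{\bar H}$ is projective, which is the lemma. I expect the main obstacle to be the third paragraph: since $H_{\alpha\beta}$ is \emph{not} a sub-Hopf-algebra of $D(\Lambda_2)$, the naive compatibility $\res\circ\psi_M=\psi_{M|_{H_{\alpha\beta}}}\circ\res$ is unavailable, and the device that rescues it is the passage to the principal block together with the Hopf structure on $\bar H$ furnished by Lemma~\ref{tensor}.
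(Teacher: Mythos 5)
Your proposal is correct and follows essentially the same route as the paper's own proof: reduce to the principal block, use the Hopf structure on $\bar H = H_{\alpha\beta}f_+$ coming from (\ref{coprod_t}) (i.e.\ from the proof of Lemma \ref{tensor}) to identify $(\zeta\otimes M)|_{\bar H}$ with $\tau_{\alpha\beta}(\zeta)$ tensored over $\bar H$ with $M|_{\bar H}$, and then decompose $M|_{\bar H}$ (projective part splits off, semisimple part contributes copies of $\tau_{\alpha\beta}(\zeta)$ and $\tau_{\alpha\beta}(\zeta)\otimes k_-$). Your explicit check that $\psi_{k_-}$ is injective via the autoequivalence $-\otimes k_-$ merely spells out what the paper leaves implicit in that last step.
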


\begin{proof}
As $M$ is indecomposable we can assume without loss of generality that $f_+M =M$. Recall $H_{\alpha \beta} f_+ \cong k\langle s \rangle \ltimes k[t ]/t^2$, so it has two simple one-dimensional modules which we denote by $k$ and $k_-$.
Therefore $M|_{H_{\alpha \beta}f_+}$ is the direct sum of a projective module $M_p$ and a semisimple module $M_n$.
Using (\ref{coprod_t}) we have \[(\zeta \otimes M)|_{H_{\alpha\beta}f_+}= (\zeta\otimes M_n)|_{H_{\alpha\beta}f_+} \oplus (\zeta \otimes M_p)|_{H_{\alpha\beta}f_+}\] as exact sequences.

Clearly $(k\otimes M_p)|_{H_{\alpha\beta}f_+}\cong M_p$ which is projective (hence injective) as a $H_{\alpha \beta}f_+$-module. So the
sequence $(\zeta \otimes M_p)|_{H_{\alpha\beta}f_+}$ splits. As
$M_n$ decomposes as a direct sum of one-dimensional simple
modules, $(\zeta\otimes M_n)|_{H_{\alpha\beta}f_+}$ is a direct product of copies of $\tau_{\alpha \beta }(\zeta)$ and $\tau_{\alpha \beta }(\zeta) \otimes k_-$. So $(\zeta \otimes M)|_{H_{\alpha\beta}f_+}=0$ if and only if $M|_{H_{\alpha \beta}f_+}$ is projective or $\tau_{\alpha \beta }(\zeta)=0$.
\end{proof}

\begin{theo}
For any $D(\Lambda_2)$-module $M$, the map $\phi_M: \bar{V}^r(M) \to V^s(M)$ sending $\overline{(\alpha, \beta )}$ to $\ker
\tau_{\alpha \beta}$ is bijective.
\end{theo}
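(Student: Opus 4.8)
The plan is to prove that $\phi_M$ is a well-defined bijection by establishing injectivity and surjectivity separately, using the preceding lemma (on $(\zeta \otimes M)|_{H_{\alpha\beta}}$) together with Proposition~\ref{zeta} relating $V^s(M \otimes L_\zeta)$ to $V^s(M) \cap \langle \zeta \rangle$. The key conceptual identity is that membership of a line $\overline{(\alpha,\beta)}$ in the rank variety should correspond exactly to the prime $\ker \tau_{\alpha\beta}$ lying in the support variety, and the bridge between the two is the behaviour of the cohomology elements $L_\zeta$ under restriction to $H_{\alpha\beta}$.

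**Well-definedness and injectivity.** First I would check that $\phi_M$ is well-defined: by the remark in Section~\ref{restriction}, $\ker \tau_{\alpha\beta}$ is a homogeneous prime in $\operatorname{Proj}\ext^{ev}_{D(\Lambda_2)}(k,k)$ depending only on the line $\overline{(\alpha,\beta)}$, so the target is correct. For injectivity, I would use the explicit restriction computation from Section~\ref{restriction}: since $\tau_{\alpha\beta}$ sends $x,y,z$ to $\alpha^2\gamma, \beta^2\gamma, \alpha\beta\gamma$ respectively, distinct lines $\overline{(\alpha,\beta)}$ give distinct kernels (two lines agree iff $(\alpha^2 : \beta^2 : \alpha\beta)$ agree as projective points, which in characteristic $\neq 2$ pins down the line). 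Thus $\overline{(\alpha,\beta)} \mapsto \ker \tau_{\alpha\beta}$ is already injective as a map on all of $P^1$, hence injective on $\bar V^r(M)$.

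**Surjectivity onto $V^s(M)$.** This is the heart of the argument and the main obstacle. I need two inclusions. For $\phi_M(\bar V^r(M)) \subseteq V^s(M)$: if $\overline{(\alpha,\beta)} \in \bar V^r(M)$, then $M|_{H_{\alpha\beta}}$ is not projective, and I must show $\ker \tau_{\alpha\beta} \supseteq \Ann(M)$, i.e. every $\zeta$ annihilating $\ext^*(M,M)$ satisfies $\tau_{\alpha\beta}(\zeta)=0$. The natural route is to argue that if $\tau_{\alpha\beta}(\zeta) \neq 0$ while $M|_{H_{\alpha\beta}}$ is non-projective, then $\zeta$ acts non-nilpotently on $\ext^*(M,M)$ after restriction, so $\zeta \notin \Ann(M)$; here the previous lemma identifying $(\zeta\otimes M)|_{H_{\alpha\beta}}$ with copies of $\tau_{\alpha\beta}(\zeta)$ on the non-projective part $M_n$ is exactly what controls this. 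Conversely, for $V^s(M) \subseteq \phi_M(\bar V^r(M))$: given a prime $\mathfrak{p} \in V^s(M)$, since $V^s(M) \subseteq V^s(k)$ and $\res$ induces a bijection $k^2 \to \operatorname{maxspec}$, every such $\mathfrak{p}$ equals $\ker\tau_{\alpha\beta}$ for some line; I then use the lemma to translate $\mathfrak{p} \supseteq \Ann(M)$ back into non-projectivity of $M|_{H_{\alpha\beta}}$, placing the line in $\bar V^r(M)$.

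**Expected difficulty.** The routine parts (well-definedness, injectivity) follow from the explicit restriction formulae. The genuine obstacle is surjectivity, specifically translating the algebraic condition $\mathfrak{p} \supseteq \Ann(M)$ into the module-theoretic condition ``$M|_{H_{\alpha\beta}}$ is not projective.'' I expect to need a rank-variety analogue of the standard characterization $V^s(M) = \{\mathfrak{p} : L_\zeta \otimes M \text{ not projective for } \zeta \in \mathfrak{p}\}$, invoking Proposition~\ref{zeta} and Dade's Lemma (C4) via Lemma~\ref{detect}, combined with the fact that $\tau_{\alpha\beta}$ is surjective so that the correspondence between lines and primes is tight. Matching the single generator $\gamma$ of $\ext^*_{H_{\alpha\beta}}(k,k)$ against the three-generator ring $k[x,y,z]/(xy-z^2)$ under restriction is where the argument must be handled carefully, but the square-root ambiguity noted in Section~\ref{restriction} does not affect the bijection at the level of the underlying point sets.
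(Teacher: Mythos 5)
Your well-definedness and injectivity arguments are correct and match the paper's, and your instinct that surjectivity is the crux and will require the Carlson modules $L_\zeta$ together with Proposition \ref{zeta} is also right. But the surjectivity step, as you present it, has a genuine gap. The mechanism you actually propose in the main text --- ``use the lemma to translate $\mathfrak{p} \supseteq \Ann(M,M)$ back into non-projectivity of $M|_{H_{\alpha\beta}}$'' --- cannot work, because the lemma's conclusion concerns the vanishing of the \emph{restricted} class $(\zeta\otimes M)|_{H_{\alpha\beta}}$, not the vanishing of $\zeta\otimes M$ as an element of $\ext^*_{D(\Lambda_2)}(M,M)$. Arguing contrapositively: if $M|_{H_{\alpha\beta}}$ were projective, the lemma only tells you that $(\zeta\otimes M)|_{H_{\alpha\beta}}=0$ for \emph{every} $\zeta$; it gives you no way to produce an element of $\Ann(M,M)$ lying outside $\ker\tau_{\alpha\beta}$, since restriction to $H_{\alpha\beta}$ kills far more than $\Ann(M,M)$. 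So the lemma alone cannot convert the ideal-theoretic hypothesis $\Ann(M,M)\subseteq\ker\tau_{\alpha\beta}$ into module-theoretic non-projectivity.

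The paper closes this gap with a construction that your sketch gestures at but never carries out: choose homogeneous $\zeta_1,\dots,\zeta_n$ with $\ker\tau_{\alpha\beta}=\bigcap_{i}\langle\zeta_i\rangle$, set $L_\zeta=L_{\zeta_1}\otimes\cdots\otimes L_{\zeta_n}$, and apply Proposition \ref{zeta} repeatedly to get $V^s(L_\zeta\otimes M)=\{\ker\tau_{\alpha\beta}\}$. Since this is non-trivial, $L_\zeta\otimes M$ is not projective, so $\bar{V}^r(L_\zeta\otimes M)$ is non-empty; applying the well-definedness and injectivity statements to the \emph{auxiliary} module $L_\zeta\otimes M$ (not to $M$) then forces $\overline{(\alpha,\beta)}\in\bar{V}^r(L_\zeta\otimes M)$. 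Even at that point you are not done: you must descend from $L_\zeta\otimes M$ back to $M$, i.e.\ conclude $\overline{(\alpha,\beta)}\in\bar{V}^r(M)$ from $\overline{(\alpha,\beta)}\in\bar{V}^r(L_\zeta\otimes M)$. This requires the tensor product property of the rank variety, Lemma \ref{tensor} (or a substitute using (C2) and (C3) on the exact sequences defining the $L_{\zeta_i}$), which your proposal never invokes anywhere. Without that final descent, non-projectivity of $(L_\zeta\otimes M)|_{H_{\alpha\beta}}$ says nothing about $M|_{H_{\alpha\beta}}$ itself, so the surjectivity argument remains incomplete.
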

\begin{proof}
Let $(\alpha, \beta)\in V^r(M)$, then $M|_{H_{\alpha\beta}}$ is not projective. Let $\zeta \in \ker \psi_M$ be a homogeneous element. Then $(\zeta\otimes M)|_{H_{\alpha\beta}}=0$ and by the
previous lemma we have $\tau_{\alpha \beta}(\zeta)=0$. This shows
that $\ker \psi_M \subseteq \ker \tau_{\alpha\beta}$, so the map
$\phi_M$ sends $\bar{V}^r(M)$ to $V^s(M)$ and is well defined.

The calculation of $\res^{D(\Lambda_2)}_{H_{\alpha\beta}}$ given earlier shows that the map is injective, so it remains to show surjectivity. Again using the calculation we have that every element of $V^s(k)$ is of the form $\ker \tau_{\alpha \beta }$. Let $\Ann(M,M) \subseteq \ker \tau_{\alpha \beta}$. We need to show that $(\alpha, \beta ) \in V^r(M)$. There are homogeneous elements $\zeta_1, \cdots, \zeta_n$ such that $\ker \tau_{\alpha \beta}=\cap_{i=1} ^n \langle \zeta_i \rangle$. We set $L_{\zeta} = L_{\zeta_1} \otimes \cdots \otimes L_{\zeta_n}$, then $V^s(L_{\zeta} \otimes M)=  \ker \tau_{\alpha \beta} = \phi_M \overline{(\alpha, \beta)} $ by applying Lemma \ref{zeta} repeatedly. The first part of the proof then implies that $\overline{(\alpha, \beta)} \in \bar{V}^r( L_{\zeta} \otimes M )$. Therefore $(\alpha, \beta) \in V^r(M)$ by Lemma \ref{tensor}.
\end{proof}

\end{subsection}

\section{The Drinfel'd double of the Taft algebra when $n>2$ and $\uqsl$} \label{n>2section}

Let $k$ be a field whose characteristic does not divide $n$, which
contains a primitive $n$th root of unity $q$.  The Drinfel'd double
of the Taft algebra over $k$ is presented as follows:
\begin{eqnarray*} D(\Lambda_n) & = & \langle x,X,g,G | x^n,\ X^n,\ g^n=1,\ G^n=1,\ gG=Gg,\ gx=q^{-1}xg, \\ & & gX=qXg,\ Gx=q^{-1}xG,\ GX=qXG,\ xX-qXx= 1-gG \rangle \end{eqnarray*}
This algebra has a Hopf structure with respect to which the elements $g$ and $G$ are grouplike, and the coproduct of $x$ and $X$ is given by
\begin{eqnarray*}
 \Delta(x) &=& 1 \otimes x + x \otimes g \\
\Delta(X) &=& 1 \otimes X + X \otimes G.
\end{eqnarray*}
The antipode $S$ inverts $g$ and $G$, and satisfies $S(x)=-xg^{-1}$ and $S(X) = -XG^{-1}$
The counit $\epsilon$ has $\epsilon(g)=\epsilon(G)=1$ and $\epsilon(x)=\epsilon(X)=0$.

$\uqsl$ is a finite-dimensional quotient of the quantised enveloping algebra of $\mathfrak{sl_2}$.  There is some variation in notation and defining relations for this algebra in the literature (\cite[VI.5]{kassel}, \cite{suter}, \cite{jie}, \cite{patra}).

According to \cite{EGST} and \cite{suter}, a non-semisimple block $\mathcal{B}$ of $D(\Lambda_n)$ or of $\uqsl$ contains exactly two isomorphism classes of simple modules, $S$ and $T$ say.  The Loewy structures of the projective covers $P_S$ and $P_T$ are
\[ \begin{tabular}{ll}
    \multicolumn{2}{c}{$S$} \\
$T$ & $T$ \\
\multicolumn{2}{c}{$S$}
   \end{tabular}
\,\,\,\mathrm{and}\,\,\,
\begin{tabular}{ll}
    \multicolumn{2}{c}{$T$} \\
$S$ & $S$ \\
\multicolumn{2}{c}{$T$}
   \end{tabular} \]
respectively.  The projective module $P=P_S \oplus P_T$ is a progenerator of $\mathcal{B}$, which is therefore Morita equivalent to $\End_\mathcal{B}(P) ^{\mathrm{op}}$.

\begin{lem}
$\End_\mathcal{B}(P) ^{\mathrm{op}}$ is isomorphic to the algebra $A$ given by
\begin{equation}\label{A_presentation} k \langle y_1,y_2 \rangle /\langle y_1^2, y_2^2, y_1y_2+y_2y_1  \rangle  \rtimes k\langle g \rangle 
\end{equation}
where $g^2=1$ and $gy_ig^{-1}=-y_i$.
\end{lem}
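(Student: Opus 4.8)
The plan is to compute the basic algebra $B:=\End_\mathcal{B}(P)^{\mathrm{op}}$ directly as a quiver algebra $kQ/I$ and to check that $A$ has exactly the same quiver and relations; since $P$ contains each indecomposable projective of $\mathcal{B}$ exactly once, $B$ is basic and Morita equivalent to $\mathcal{B}$, and $A$ is basic because $A/\rad A\cong k\langle g\rangle\cong k\times k$, so an isomorphism $B\cong A$ is exactly what the lemma asserts. First I would read off $Q$ from the two Loewy diagrams: the multiplicity of $T$ in $\rad P_S/\rad^2 P_S$ is $2$ and that of $S$ in $\rad P_T/\rad^2 P_T$ is $2$, so $\dim\ext^1(S,T)=\dim\ext^1(T,S)=2$ while $\ext^1(S,S)=\ext^1(T,T)=0$. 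Hence $Q$ has two vertices $s,t$ with arrows $\alpha_1,\alpha_2\colon s\to t$ and $\beta_1,\beta_2\colon t\to s$. Using $\dim\Hom(P_i,P_j)=[P_j:S_i]$ one gets $\dim_k B=8$, and the diagrams give $\rad^3 B=0$ with $e_s\rad^2 B\,e_s$ and $e_t\rad^2 B\,e_t$ each one-dimensional, namely the simple socles of $P_S$ and $P_T$.

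In parallel I would present $A$ itself as a quiver algebra. As $\operatorname{char}k\neq 2$ the idempotents $e_\pm=(1\pm g)/2$ are orthogonal and primitive, and $R=k\langle y_1,y_2\rangle/(y_1^2,y_2^2,y_1y_2+y_2y_1)$ is local with basis $1,y_1,y_2,y_1y_2$. A short computation using $gy_i=-y_ig$ (so that $e_- y_i=y_ie_+$) shows that $Ae_+$ and $Ae_-$ are four-dimensional with precisely the two Loewy structures displayed before the lemma. The arrows are the classes of $y_ie_+$ and $y_ie_-$, and multiplying them out gives $\beta_i\alpha_j=y_iy_je_+$ and $\alpha_i\beta_j=y_iy_je_-$; the relations of $R$ then yield $\alpha_i\beta_i=\beta_i\alpha_i=0$ together with $\alpha_1\beta_2+\alpha_2\beta_1=0$ and $\beta_1\alpha_2+\beta_2\alpha_1=0$, while every path of length $3$ vanishes. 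Thus $A\cong kQ/I$ for this explicit $I$.

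It then remains to show that $B$ obeys the same relations. Because $\rad^3 B=0$ and each socle is one-dimensional, the whole multiplication of $B$ is encoded by two bilinear pairings, $\langle\beta_1,\beta_2\rangle\times\langle\alpha_1,\alpha_2\rangle\to\soc P_S$ and $\langle\alpha_1,\alpha_2\rangle\times\langle\beta_1,\beta_2\rangle\to\soc P_T$. Since $\mathcal{B}$ is a block of a finite-dimensional Hopf algebra it is self-injective, so each projective is the injective hull of its simple socle; a degenerate pairing would produce an extra socle element in some $P_i$, so both pairings must be nondegenerate. Finally I would identify these nondegenerate pairings with the alternating form occurring in $A$ by computing the relevant products of (the images of) $x$ and $X$ on the explicit projective modules of the block recorded in \cite{EGST} and \cite{suter}; after a change of basis of the $\alpha$'s and $\beta$'s this matches $B$ with $kQ/I=A$.

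The main obstacle is exactly this last step. The quiver, the Cartan data and self-injectivity do \emph{not} determine $B$ up to isomorphism: they leave one further invariant, namely the relative position of the two pairings (concretely, the conjugacy class of the product of one pairing matrix with the inverse transpose of the other), which distinguishes an alternating pairing from a symmetric one. What has to be checked, and what makes the answer the single algebra $A$ uniformly in $n$ and in $q$, is that the explicit multiplication in the block always makes this pairing alternating, i.e.\ forces $y_1y_2+y_2y_1=0$ rather than a symmetric relation. Carrying out this computation in a way that covers both $D(\Lambda_n)$ and $\uqsl$ simultaneously, and keeping track of the scalars by which the grouplike elements act on the block, is where the real work lies.
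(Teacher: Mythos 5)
Your quiver-theoretic setup is sound, but the proposal stops exactly where the content of the lemma begins, and you say so yourself. Everything you actually verify --- the quiver with two vertices and two arrows each way, $\dim_k B=8$, $\rad^3B=0$, one-dimensional socles, nondegeneracy of the two socle pairings forced by self-injectivity --- follows formally from the Loewy diagrams and is shared by \emph{every} member of the family $A_q=k\langle y_1,y_2\rangle/(y_1^2,\,y_2^2,\,y_1y_2+qy_2y_1)\rtimes k\langle g\rangle$, $q\in k^\times$. These are generally pairwise non-isomorphic (your invariant, the conjugacy class up to scalar of the composite of the two pairing matrices, works out to $\operatorname{diag}(1,q^{-2})$ up to scalar, i.e.\ essentially $q^2$ up to inversion), so for an infinite field there are infinitely many isomorphism classes all passing your checks. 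The lemma is precisely the assertion that the block realizes the member with scalar invariant, and the step that would establish this --- computing products of the images of the block generators on its explicit projectives --- is announced (``where the real work lies'') but never carried out. As written, the argument does not distinguish $A$ from $A_q$ for any other $q$, so it proves nothing beyond what is formally forced.

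For comparison, the paper does not do that computation from scratch either; it replaces it by a citation. It first changes variables, $x_1=y_1$ and $x_2=gy_2$, to show $A\cong A'=k[x_1,x_2]/(x_1^2,x_2^2)\rtimes k\langle g\rangle$, and then quotes Suter \cite[\S 5]{suter}, who computes $\End_\mathcal{B}(P)^{\mathrm{op}}\cong A'$ for $\uqsl$ from the matrix decomposition of $\End_\mathcal{B}(P)$, observing that the identical computation works for $D(\Lambda_n)$. Incidentally, this change of variables exposes a small inaccuracy in your last paragraph: the alternating-versus-symmetric dichotomy is \emph{not} an isomorphism invariant here, because the arrows $s\to t$ may be rescaled independently of the arrows $t\to s$ --- twisting by $g$ does exactly this and converts the relation $y_1y_2+y_2y_1=0$ into the commutative one, so $A_1\cong A_{-1}$. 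The honest invariant is the conjugacy-class one you name, and it takes the same (scalar) value on both the alternating and the symmetric presentations. This flexibility is not a pedantic point: it is what allows the paper to reduce the lemma to a result already in the literature. If you want a self-contained proof, you must carry out your final step on the projectives recorded in \cite{EGST} and \cite{suter}; the efficient alternative is to prove $A\cong A'$ first and cite Suter, as the paper does.
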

The algebra $A$ is a split extension of the quantum symmetric algebra \cite{BEH} generated by $y_1$ and $y_2$ by the group algebra of a cyclic group $\langle g \rangle$ of order $2$.

\begin{proof}
By replacing $y_2$ with $x_2:=gy_2$ and setting $x_1:=y_1$ we see that $A$ is isomorphic to 
\[ A' =  k[x_1,x_2]/\langle x_1^2, x_2^2 \rangle  \rtimes k\langle g \rangle\] where $g^2=1$ and $gx_ig^{-1}=-x_i$, 
a split extension of the Kronecker algebra by a cyclic group of order 2.
Suter \cite[\S 5]{suter} shows $\End_\mathcal{B}(P) ^{\mathrm{op}} \cong A'$ for $\uqsl$, and the proof is identical for $D(\Lambda_n)$.  It uses the expression
\[ \End_\mathcal{B}(P) = \left( \begin{array}{ll}
    \End_\mathcal{B}(P_S) & \Hom_\mathcal{B}(P_S,P_T)
 \\ \Hom_\mathcal{B}(P_T,P_S) & \End_\mathcal{B}(P_T)
   \end{array} \right).
\]
The generators $g,x_1,x_2$ correspond to matrices of the form
\[ \left( \begin{array}{ll}
           \id & 0 \\
0& -\id
          \end{array} \right), \,\,\, \left( \begin{array}{ll}
           0 & \phi \\
0& 0
          \end{array} \right),
\,\,\, \left( \begin{array}{ll}
           0 & 0 \\
\psi& 0
          \end{array} \right)\]
respectively.
\end{proof}

Consider the subalgebra $B$ of $A$ generated by $y_1$ and $y_2$, which is isomorphic to $k\langle y_1, y_2 \rangle / (y_1 ^2, y_2 ^2, y_1y_2+y_2y_1)$.  If the characteristic of $k$ is not 2 then an $A$-module is projective if and only if it is projective on restriction to $B$.  Let $B_{\alpha \beta}=\langle \alpha y_1 +\beta y_2, g \rangle$ where $\alpha, \beta \in k$ are not both zero.
Using \cite[3.1,3.3]{BEH}, or by applying the results of Section \ref{detecting_proj_section} we see that the rank variety for $A$-modules defined by 
\[ V^r(M)= \{ (\alpha, \beta) \in k^2 : M|_{B_{\alpha \beta}} \mathrm{is\,\, not\,\, projective} \} \]
 satisfies (C1)-(C4) for all finitely generated $A$-modules. Note that with the notation of the previous section, we have $B_{\alpha \beta} \cong H_{\alpha \beta}f_+$. 

\begin{rem} The algebra $A$ is isomorphic to the quiver algebra $kQ/I$ appearing
n \cite[2.26]{EGST} and \cite{taillefer}.  If $k$ has
characteristic $2$ then $A$ is isomorphic to the group algebra of
an elementary abelian group of order $8$. 
\end{rem}

Let $e$ be an idempotent in $\mathcal{B}$ such that $P=\mathcal{B}e$, so 
$A \cong \End_\mathcal{B} (\mathcal{B} e )^\mathrm{op} \cong e\mathcal{B}e$, where the second isomorphism is given by $f \mapsto f(e)$.
Then the Morita equivalence between $_\mathcal{B}\mathrm{mod}$ and
$_{e\mathcal{B}e}\mathrm{mod}$ acts on modules by $M \mapsto eM$.
It follows that $M$ is a projective $\mathcal{B}$-module if and
only if $eM$ is projective when regarded as an $A$-module by the
isomorphism $A  \cong e\mathcal{B}e$.

\begin{defn}Let char $k \neq 2$ and let $M$ be a finitely generated $D(\Lambda_n)$-module or a $\uqsl$-module that belongs to a block $\mathcal{B}$.  Let $e$ be an idempotent in $\mathcal{B}$ such that $\mathcal{B}e$ is a progenerator.  Then the \textbf{rank variety} $V^r(M)$ is defined to be
\[  \{ (\alpha, \beta) \in k^2 : eM|_{B_{\alpha \beta}} \mathrm{is\,\, not\,\, projective} \}. \]
\end{defn}
By the previous remark this defines a rank variety satisfying
(C1)-(C4).

\begin{subsection}{Finite generation of the $\ext$-ring of $A$}
Let $k$ be a field of characteristic not 2 and let $A$ be as in (\ref{A_presentation}). Then $A$ has two projective indecomposable summands $P_+:=A e_+$ and $P_-:=A e_-$ where $e_+=1+g$ and $e_-=1-g$. We denote the respective simple modules by $k_+$ and $k_-$.
\begin{theo}
Let $M$, $N$ be two finitely generated $A$-modules. Then
$\ext^*_{A}(M,N)$ is finitely generated as
a $\ext^{*}_{A}(k,k)$-module and $\ext^{*}_{A}(k,k)$
is a finitely generated algebra.
\end{theo}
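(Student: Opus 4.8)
The plan is to exploit the skew group algebra structure $A \cong k[x_1,x_2]/(x_1^2,x_2^2) \rtimes k\langle g\rangle$ established above: here $C := k[x_1,x_2]/(x_1^2,x_2^2)$ is an ordinary (commutative) complete intersection and $g$ acts by $x_i \mapsto -x_i$, and since $\operatorname{char} k \neq 2$ the group algebra $k\langle g\rangle$ is semisimple. The argument behind Remark \ref{ext} applies verbatim to two distinct modules, yielding for arbitrary $A$-modules $M,N$ a natural isomorphism of graded $\ext^*_A(k,k)$-modules
\[ \ext^*_A(M,N) \cong \ext^*_C(M|_C, N|_C)^{\langle g\rangle}, \]
where $g$ acts on $\ext^*_C(M|_C,N|_C)$ through the automorphism $x_i\mapsto -x_i$ of $C$ combined with the given $g$-actions on $M$ and $N$. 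This reduces the entire theorem to a statement about $\langle g\rangle$-invariants of $\ext$-modules over $C$.

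First I would treat the algebra. Taking $M=N=k$ gives $\ext^*_A(k,k)\cong \ext^*_C(k,k)^{\langle g\rangle}$. A K\"unneth computation identifies $\ext^*_C(k,k)$ with the finitely generated algebra on two degree-one generators $\eta_1,\eta_2$ dual to $x_1,x_2$, on which $g$ acts by $\eta_i\mapsto -\eta_i$. As $\langle g\rangle$ is a finite group acting on a finitely generated $k$-algebra, Noether's finiteness theorem shows the invariant ring is finitely generated; explicitly it is generated by $\eta_1^2,\eta_2^2,\eta_1\eta_2$ and is isomorphic to $k[X,Y,Z]/(XY-Z^2)$, recovering Taillefer's computation. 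In particular $R:=\ext^*_A(k,k)$ is a finitely generated commutative $k$-algebra, hence Noetherian, a fact I will use below.

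For the module statement I would reduce to simple modules. A short exact sequence of $A$-modules gives long exact sequences in $\ext^*_A(-,N)$ and $\ext^*_A(M,-)$ whose maps are all homomorphisms of $R$-modules for the Yoneda action of $\ext^*_A(k,k)$; since an extension of finitely generated $R$-modules by another is again finitely generated over the Noetherian ring $R$, a double induction on composition length reduces the claim to showing that $\ext^*_A(S,T)$ is a finitely generated $R$-module for the two simple modules $S,T\in\{k_+,k_-\}$. But each simple restricts to the trivial $C$-module, so $\ext^*_C(S|_C,T|_C)=\ext^*_C(k,k)=k[\eta_1,\eta_2]$, and the character-twisted $\langle g\rangle$-action singles out exactly the even-degree part when $S\cong T$ and the odd-degree part when $S\not\cong T$. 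The even part is $R$ itself and the odd part is generated over $R$ by $\eta_1$ and $\eta_2$; either way $\ext^*_A(S,T)$ is finitely generated over $R$, which completes the proof.

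The main obstacle is the first paragraph: one must verify that the invariant-theoretic isomorphism of Remark \ref{ext} genuinely extends to distinct modules $M\neq N$ carrying the correct character-twisted $\langle g\rangle$-action, since it is precisely this twist that converts the $(k_+,k_-)$-component into the odd part of $k[\eta_1,\eta_2]$ rather than the even part. Everything afterwards is either Noether's theorem or a routine d\'evissage, the only point requiring care being the verification that $R$ is Noetherian so that the reduction to simples is legitimate.
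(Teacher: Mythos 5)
Your argument is correct, but at the crucial step it takes a genuinely different route from the paper. The overall skeleton is the same: both you and the paper first identify $\ext^*_A(k_\pm,k_\pm)\cong k[x,y,z]/(z^2-xy)$ (the paper by citing Taillefer or its Remark~\ref{ext}, you by the same invariant-theoretic computation), and both reduce the general case to pairs of simple modules by induction on composition length. The difference is how $\ext^*_A(k_+,k_-)$ is shown to be finitely generated over $\ext^*_A(k_+,k_+)$: the paper does this by hand, writing out the minimal projective resolutions of $k_+$ and $k_-$ with explicit differentials, viewing $\ext^n_A(k_+,k_-)$ as $\Hom_{K^{-,b}(\mathcal{P})}(\mathsf{P},\mathsf{P_-}[n])$, and factoring a spanning set $\zeta_t$ of each $\ext^n_A(k_+,k_-)$ explicitly as a chain map representing an element of $\ext^{n-1}_A(k_+,k_+)$ followed by one representing an element of $\ext^1_A(k_+,k_-)$; you instead extend Remark~\ref{ext} to pairs of distinct modules, $\ext^*_A(M,N)\cong\ext^*_C(M|_C,N|_C)^{\langle g\rangle}$, which is a standard fact (an $A$-projective resolution restricts to a $C$-projective one, $\Hom_A(-,N)=\Hom_C(-,N)^{\langle g\rangle}$, and taking $\langle g\rangle$-invariants is exact because $k\langle g\rangle$ is semisimple), and then read off both the ring structure and the degree-one generation from the even/odd decomposition of $\ext^*_C(k,k)$ under the plain and character-twisted actions. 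Your route is shorter, more conceptual, and yields all graded dimensions at once (e.g.\ $\dim\ext^{2i+1}_A(k_+,k_-)=2i+2$, which the paper computes from its resolution); the paper's route is self-contained at the chain level and produces explicit cocycle representatives, which it reuses elsewhere. Two blemishes in your write-up, neither fatal: the K\"unneth theorem gives the \emph{graded} tensor product, so $\ext^*_C(k,k)$ is the skew polynomial ring $k\langle\eta_1,\eta_2\rangle/(\eta_1\eta_2+\eta_2\eta_1)$ rather than a commutative one, hence Noether's finiteness theorem does not literally apply --- but your explicit generators $\eta_1^2,\eta_2^2,\eta_1\eta_2$ render that appeal unnecessary, and the paper's own Remark~\ref{ext} commits the same abuse; and, like the paper, you leave unspecified which $\ext^*_A(k,k)$-module structure on $\ext^*_A(M,N)$ is meant for non-simple $M,N$ (Yoneda composition alone only makes $\ext^*_A(M,N)$ a module over $\ext^*_A(M,M)$ and $\ext^*_A(N,N)$), so this vagueness is shared with the paper rather than introduced by you.
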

\begin{proof}
By \cite[3.3]{taillefer} or Remark \ref{ext} the cohomology ring $\ext^*_A(S,S)$ is isomorphic to 
by $k[x,y,z]/ (z^2-xy)$ where $x, y$ and $z$ are
in degree two for $S=k_-$ or $S=k_+$. Therefore  $\ext^*_A(S,S)=
\ext^{ev}_A(S,S)$ is a commutative finitely generated
ring.

By induction on the length of a composition series of $M$ and $N$, it is
sufficient to show that $\ext^{*}_A(k_+,k_-)$ is 
finitely generated as $\ext^{*}_A(k_+,k_+)$-module.  We show next that $\ext^{*}_A(k_+,k_-)$ is generated by $\ext^{*}_A(k_+,k_+) $ in degree one. 
Note that a minimal projective resolution $\mathsf{P}$ is given by 
\[ \cdots \rightarrow P_+ \oplus P_+\oplus P_+ \to P_-\oplus P_- \to P_+ \] We denote by $e^j$ the idempotent generating the $j$th copy of $P_\pm$ at a fixed degree in this projective resolution.  The differentials are then given by $\partial_i(e^{i+1})=y_{1}e^{i},$ $\partial_i(e^1)=y_2e^1$ and $\partial_i(e^j)=y_2e^{j}+y_1e^{j-1}$ for $1<j <i+1$.
Therefore $\ext^*_A(k_+,k_-)$ is non-zero only in odd degrees and
$\dim \ext^{2i+1}(k_+,k_-)=2i+2$.

Similarly a minimal projective resolution $\mathsf{P_-}$ of $k_-$ is given by 
\[ \cdots\rightarrow  P_- \oplus P_- \oplus P_- \to P_+ \oplus P_+ \to P_- \]  The differentials are then given as above.

Let $\mathcal{P}$ denote the category of projective finite-dimensional $A$-modules and let 
$K^{-,b}(\mathcal{P})$ denote the homotopy category of bounded above complexes with bounded homology.
Then we can identify $\ext^n _A(k_+, k_-)$ with $\Hom_{K^{-,b}(\mathcal{P}) }( \mathsf{P}, \mathsf{P_-} [n] )$.
So let $\zeta \in  \ext^n_A(k_+, k_-) $ be a non-zero element with $n$ odd. Then we can view $\zeta$ as an element of $\Hom_{K^{-,b}(\mathcal{P})} ( \mathsf{P} , \mathsf{P_-}[n])$. We fix $1\le t \le n+1$ and take $\zeta_t^i(e^{j})=e^{j-t+1}$ for $j=t ,\cdots, t+i-n$ and $i \ge n$ and $0$ otherwise. The elements $\zeta_t$ span $\ext^n_A(k_+,k_-)$. 

Then $\zeta_t$ factors through a map $f:\mathsf{P}[n-1] \to  \mathsf{P_-}[n] $ and $h:\mathsf{P} \to \mathsf{P}[n-1]$ as 
\[   \xymatrix{ 
\cdots \ar[r] & \oplus_{i=1}^{n+1} P_- \ar[r] \ar[d]_{h^n} & \oplus_{i=1}^{n} P_+ \ar[r] \ar[d] & \oplus_{i=1}^{n-1}P_- \ar[r] \ar[d] & \cdots \ar[r] & P_ + \ar[r] \ar[d]& 0 \\
\cdots \ar[r] & \oplus_{i=1}^2 P_- \ar[r] \ar[d]_{f^n} &P_+ \ar[r] \ar[d] & 0 \ar[d] \ar[r] & \cdots \ar[r] & 0 \ar[r] \ar[d]& 0\\
 \cdots \ar[r] & P_-  \ar[r] & 0 \ar[r] & 0 \ar[r] & \cdots \ar[r] & 0\ar[r] & 0 
} \] 
where $h^i(e^j)=e^{t-j+1}$ for $j=t, \cdots, t+ i-n+1$ and $i \ge n-1$ and zero otherwise. The map $f$ is given by $f^i(e^j)=e^{j}$ for $j=1,\cdots , i-n+1$ and $i \ge n$ and $0$ otherwise.
Then $f \in \Hom_{K^{-,b}(\mathcal{P})}(\mathsf{P}[n-1], \mathsf{P_-}[n])$ can be seen as an element of $\ext^{1}_A(k_+,k_-)$ and $h$ as an element of $\ext^{n-1}_A(k_+,k_+)$. Furthermore composition of chain maps in $K^{-,b}(\mathcal{P})$ corresponds to Yoneda multiplication in the $\ext$-ring. This finishes the proof.
\end{proof}
As the non-semi-simple indecomposable blocks of $D(\Lambda_n)$ and $\uqsl$ are Morita equivalent to $A$, we get immediately by the previous theorem:
\begin{cor}Let $H$ be $\uqsl$ or $D(\Lambda_n)$ for some $n$.
Let $M$, $N$ be finitely generated $H$-modules. Then
$\ext^*_{H}(M,N)$ is finitely generated as
$\ext^{*}_{H}(k,k)$-module and $\ext^{*}_{H}(k,k)$
is a finitely generated algebra.
\end{cor}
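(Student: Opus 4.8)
The plan is to deduce the statement for $H$ from the preceding theorem by decomposing $H$ into blocks and transporting cohomology along the Morita equivalences established above. Writing $H=\bigoplus_i \mathcal{B}_i$ as a sum of blocks, every finitely generated module splits as $M=\bigoplus_i M_i$ with $M_i$ lying in $\mathcal{B}_i$, and since $\ext^*_H(M_i,N_j)=0$ for $i\neq j$ we obtain $\ext^*_H(M,N)=\bigoplus_i \ext^*_{\mathcal{B}_i}(M_i,N_i)$. The trivial module $k$ lies in the principal block, which is non-semisimple and therefore Morita equivalent to $A$; transporting along this equivalence gives a graded ring isomorphism $\ext^*_H(k,k)\cong \ext^*_A(k,k)$, and the theorem shows the latter is a finitely generated algebra. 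This already establishes the second assertion of the corollary.

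For the module statement I would treat the blocks separately. If $\mathcal{B}_i$ is semisimple then $\ext^*_{\mathcal{B}_i}(M_i,N_i)$ is concentrated in degree zero and finite-dimensional, so it is trivially finitely generated over $\ext^*_H(k,k)$. If $\mathcal{B}_i$ is non-semisimple then it is Morita equivalent to $A$, and the equivalence induces graded isomorphisms $\ext^*_{\mathcal{B}_i}(M_i,N_i)\cong \ext^*_A(M_i',N_i')$ which are compatible with Yoneda composition, where $M_i',N_i'$ denote the images of $M_i,N_i$. By the theorem the right-hand side is a finitely generated module over $\ext^*_A(k,k)$ for the action used there.

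The main obstacle is that the $\ext^*_H(k,k)$-module structure on $\ext^*_H(M,N)$ is defined through the map $\psi_M$ arising from the Hopf (tensor) structure of $H$, whereas the module structure provided by the theorem is built from the intrinsic tensor structure of $A$; the block-wise Morita equivalences are equivalences of algebras only and need not intertwine these two actions. To bridge the gap I would first reduce, by induction on composition length using the long exact sequences in cohomology exactly as in the proof of the theorem, to the case of simple modules, so that it suffices to prove each $\ext^*_H(S,S)$ is finitely generated over $\ext^*_H(k,k)$ via $\psi_S$. For $S$ in a semisimple block this is clear, while for $S$ in a non-semisimple block both rings are isomorphic to $k[x,y,z]/(xy-z^2)$ and the issue becomes whether $\psi_S$ is a finite ring homomorphism. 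I expect this to be the delicate point, and would verify it by computing $\psi_S$ explicitly in the manner of the restriction-map calculation in Section \ref{restriction}, checking that its image has finite codimension; granting this, the Noetherianity of $\ext^*_H(k,k)$ together with the block decomposition yields finite generation of $\ext^*_H(M,N)$ in general.
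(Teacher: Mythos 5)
Your skeleton --- decompose $H$ into blocks, use that cross-block $\ext$-groups vanish, transport through the Morita equivalence to $A$, and quote the preceding theorem --- is exactly the paper's proof, which consists of precisely that one sentence ("we get immediately by the previous theorem"). You have also put your finger on a genuine subtlety that the paper's ``immediately'' suppresses: the module structure the corollary needs (to feed the support-variety machinery of Section 2) is the Hopf-theoretic one via $\psi_M$, whereas what a Morita equivalence transports is the Yoneda structure used in the theorem's proof, and these do not match up for free.

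The problem is that your proposal names this gap but does not close it, so as a proof it is incomplete at exactly the point you yourself call delicate. Two concrete issues. First, the reduction is misstated: induction on composition length reduces the claim to $\ext^*_H(S,T)$ for \emph{all pairs} of simples, not just the diagonal groups $\ext^*_H(S,S)$; the off-diagonal groups inside a non-semisimple block are nonzero (already $\ext^1\neq 0$ by the Loewy structure of the projectives) and they are precisely where the theorem's degree-one generation statement enters. This is repairable --- combine the Morita-transported fact that $\ext^*_H(S,T)$ is finitely generated over $\ext^*_H(S,S)$ under Yoneda composition with finiteness of $\psi_S$ and transitivity, using that the $\psi_S$-action on $\ext^*_H(S,T)$ is Yoneda composition with the image of $\psi_S$ --- but you never state this gluing. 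Second, and more seriously, the crux that $\psi_S$ is a finite ring homomorphism is only conjectured (``I expect \dots granting this''), and your plan to verify it by an explicit computation in the style of the restriction-map calculation is not routine for $n>2$: there the non-trivial simple in the principal block of $D(\Lambda_n)$ or $\uqsl$ is not one-dimensional, so no analogue of that small computation is available. The standard way to finish is instead to use the Hopf-algebra isomorphism $\ext^*_H(M,N)\cong \ext^*_H(k,M^*\otimes N)$, coming from the adjunction $\Hom_H(P\otimes M,N)\cong \Hom_H(P,M^*\otimes N)$ for $P$ projective; under it the $\psi_M$-action becomes the plain Yoneda action of $\ext^*_H(k,k)$, which \emph{is} preserved by the Morita equivalence on the principal block. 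Passing to the principal-block component of $M^*\otimes N$ and inducting on its composition series, the theorem's statements about $\ext^*_A(k_+,k_\pm)$ give the base cases, and finiteness of each $\psi_S$ then falls out as a consequence rather than being required as an input.
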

\begin{rem} We can use previous results to determine certain restriction maps.  Define $B_{\alpha \beta}=\langle \alpha y_1 +\beta y_2, g \rangle$ for $\alpha,\beta$ not both zero.  Then the restriction map $\res^A_{B_{\alpha \beta}}: \ext_A ^*(k,k) \to \ext_{B_{\alpha \beta}} ^* (k,k)$ is identical to the one computed in Section \ref{restriction} as the non-semisimple block $D(\Lambda_2)f_+$ of $D(\Lambda_2)$ is isomorphic to $A$, with the isomorphism sending $H_{\alpha \beta}f_+$ to $B_{\alpha \beta}$.
\end{rem}
\end{subsection}
\end{section}

\bibliography{hopf_alg_projectivity}
\bibliographystyle{amsalpha}

\end{document}